\documentclass[11pt,a4]{amsart}

\topmargin=0pt
\textheight=650pt
\textwidth=440pt
\oddsidemargin=6pt
\evensidemargin=6pt

\usepackage{amsmath}
\usepackage{cancel}
\usepackage{graphicx}
\usepackage{url,hyperref}
\usepackage[style=american]{csquotes}
\usepackage{enumerate}
\usepackage[width=\textwidth]{caption}
\usepackage{subcaption}
\DeclareCaptionSubType*[arabic]{figure}

\usepackage[normalem]{ulem}

\usepackage{own-macros}

\begin{document}

\title[$(r,D,R)$-Blaschke-Santal\'o diagram]{On $(r,D,R)$-Blaschke-Santal\'o diagrams with regular $k$-gon gauges.}

\author{Ren\'e Brandenberg}
\address{Technical University of Munich, 85747 Garching bei M\"unchen, Germany} \email{brandenb@ma.tum.de}

\author{Bernardo Gonz\'alez Merino}
\address{Departamento de Did\'actica de las Ciencias Matem\'aticas y Sociales, Universidad de Murcia, 30100, Murcia, Spain}
\email{bgmerino@um.es}


\thanks{This research is a result of the activity developed within the framework of the Programme in Support
of Excellence Groups of the Regi\'on de Murcia, Spain, by Fundaci\'on S\'eneca, Science and Technology
Agency of the Regi\'on de Murcia. The second author is partially supported by Fundaci\'on S\'eneca project 19901/GERM/15,
Spain, and by MICINN Project PGC2018-094215-B-I00 Spain.}

\subjclass[2020]{}

\keywords{Convex sets; Blaschke-Santal\'o diagram; Geometric inequalities; Complete system of inequalities; Circumradius; Diameter; Inradius; Parallelotope; Simplex; $k$-gon; Optimal Containment; Jung constant}

\begin{abstract}


We provide a study of Blaschke-Santal\'o diagrams for the inradius, diameter, and circumradius, measured with respect to different gauges. This contrasts previous works on those diagrams, which are all considered for euclidean measure. By proving several new inequalities and properties between these three functionals, we compute the intersection and the union over all possible gauges of those diagrams, showing that they coincide with the corresponding diagrams of a parallelotope and (in the planar case) a triangle, respectively. Further new extremal properties are derived considering the diagrams with respect to a regular pentagon or hexagon gauge.
\end{abstract}

\maketitle

\section{Introduction}

Let $\mathcal K^n$ be the \emph{set of convex bodies} (i.e. convex and compact sets) \emph{in $\R^n$} and $\mathcal K^n_0$ be the subset of $\mathcal K^n$ formed by \emph{centrally symmetric sets}. 
For any $K,C\in\mathcal K^n$, let $R(K,C)$ be the \emph{circumradius} of $K$ with
respect to $C$, i.e.~the smallest value $\lambda\geq 0$ such that a translate of $K$
is contained in $\lambda C$, and $r(K,C)$ be the \emph{inradius} of $K$ with respect to $C$, i.e.~the largest value $\lambda\geq 0$ such that a translate of $K$ contains $\lambda C$. The second set $C$ is usually fulldimensional and therefore called the \emph{gauge} the functionals are based on.
Finally, let $D(K,C)$ be the \emph{diameter} of $K$ with respect to $C$, i.e.~twice the maximal circumradius $R(\{x,y\},C)$ for some $x,y\in K$.\footnote{There are other generalizations of the diameter for general gauges, but this one is the most common.}

The aim of this paper is to describe the range of values that the inradius, circumradius and diameter of $K$ and $C$ may achieve, for some fixed gauges $C$, but also for varying $K$ and $C$.
To do so, consider the mapping
\[
f:\mathcal K^n\times\mathcal K^n\rightarrow[0,1]^2,\quad f(K,C)=\left(\frac{r(K,C)}{R(K,C)},\frac{D(K,C)}{2R(K,C)}\right).
\]
The set $f(\mathcal K^n,C)$ is the well-known Blaschke-Santal\'o diagram for the inradius, circumradius and diameter with respect to $C$, or $(r,D,R)$-diagram for short. This naming honours two important mathematicians. Blaschke on the one side, who considered in 1916 the corresponding diagram for the volume, surface area and mean width of $3$-dimensional convex bodies \cite{Bl16}.  Santal\'o \cite{Sa61} on the other side, who described in 1961 several such diagrams, 
involving three functionals out of area, perimeter, circumradius, inradius, diameter, and minimum width of planar convex sets, and who completely described $f(\CK^2,\B_2)$, where $\B_2 = \{x\in\R^n : \|x\|_2 \leq 1\}$ denotes the \emph{euclidean unit ball}. To do so Santal\'o proved the validity of the inequality
\[
\begin{split}
2R(K,\mathbb B_2)(2R(K,\mathbb B_2)+\sqrt{4R(K,\mathbb B_2)^2-D(K,\mathbb B_2)^2})r(K,\mathbb B_2) & \\
\geq D(K,\mathbb B_2)^2\sqrt{4R(K,\mathbb B_2)^2-D(K,\mathbb B_2)^2} &
\end{split}
\]
for all $K\in\mathcal K^2$ and observed that, together with the well-known inequalities
\[
\begin{split}
    D(K,\B_2) & \leq 2R(K,\B_2) \\
    r(K,\B_2)+R(K,\B_2) & \leq D(K,\B_2) \\
     \sqrt{2(n+1)}R(K,\B_2) & \leq \sqrt{n} D(K,\B_2)
\end{split}
\]
fully described the diagram $f(\mathcal K^2,\B_2)$ (see Figure \ref{fig:EuclideanDiagram}).

\begin{figure}
    \centering
    \includegraphics[width=8cm]{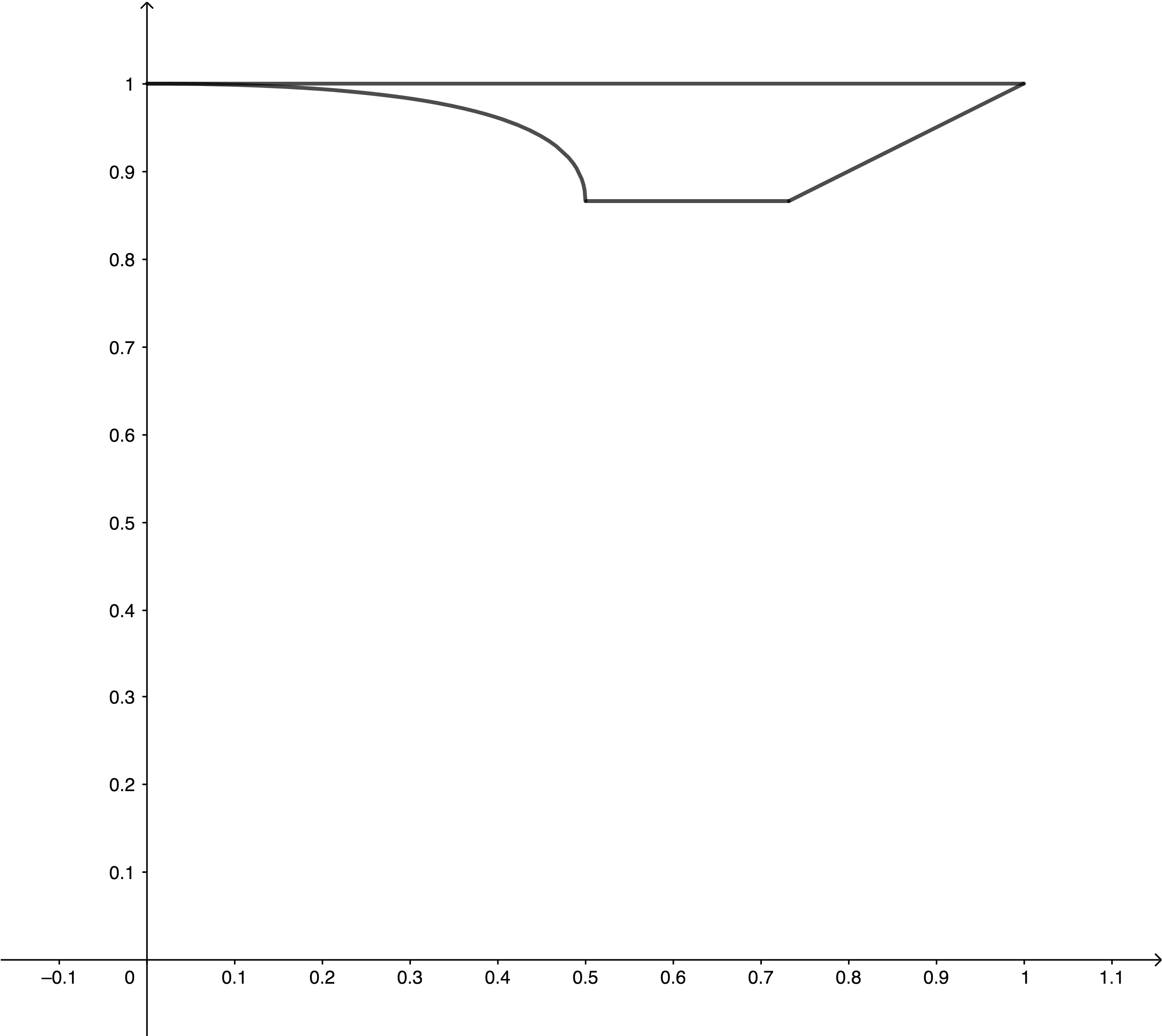}
    \caption{The (r,D,R)-diagram $f(\mathcal K^2,\B_2)$.}
    \label{fig:EuclideanDiagram}
\end{figure}

When investigating such diagrams one typically discovers that lots of special classes of convex sets describe the boundaries and that they do helps us to understand even more about their speciality.
Centrally symmetric, parallelotopes, constant width set, complete or reduced sets, or simplices are just examples of bodies filling different boundaries of those diagrams.

After Santal\'o's paper, several authors gave full descriptions of $2$-dimensional diagrams for planar sets \cite{BoHCSa03, BuBuFi, BuPr20, DeHePr, Ft20, FtHe, FtLa21, HC00, HC02, HCSG00, HCT, LuZu21}, see also \cite{AnFr, AnHe, BeBuPr}, for higher dimensional
sets \cite{HCPSSG03, HCSaGo}, or even $3$-dimensional diagrams (see \cite{BrGM17} and the incomplete description in \cite{TiKe}). But all these diagrams have been considered for euclidean spaces only. 

However, many functionals can be naturally extended to Minkowski (or Banach) spaces, or even further to generalized Minkowski spaces, as we have done above in the case of the circumradius, inradius and diameter. 

In our view, the most significant
result of this paper is twofold, and we split it into two theorems, so that two aspects can be better understood. The first one fully describes the Blaschke-Santal\'o diagram $f(\CK^2,\CK^2)$.

\begin{thm}\label{thm:dominating_diagram}
Let $K,C \in \CK^2$. Then
\[
\begin{split}
D(K,C) & \leq 2R(K,C)\\
4r(K,C) + 2R(K,C) & \leq 3 D(K,C)\\
D(K,C)\left(2R(K,C)-D(K,C)\right) & \leq 4r(K,C)D(K,C).
\end{split}
\]
Moreover, those three inequalities completely describe the boundary of $f(\CK^2,\CK^2)$.
\end{thm}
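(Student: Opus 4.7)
Inequality (1) is immediate from the definitions: for $x,y \in K$, the containment $\{x,y\} \subseteq K$ gives $R(\{x,y\},C) \leq R(K,C)$, and taking the supremum over $x,y \in K$ yields $D(K,C) \leq 2R(K,C)$.

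For inequalities (2) and (3), the plan is to exploit the combinatorial characterization of the two optimal containments $q + rC \subseteq K \subseteq p + RC$. Each of them is known, via a Helly-type theorem already used by the authors in earlier work on Minkowski containment, to produce at most three touching points whose associated $C$-normals positively span $\R^2$. Inequality (3), equivalent to $2R \leq D + 4r$ when $D > 0$, is then obtained by considering two circumscribed touching points $x_1, x_2 \in \partial K \cap \partial(p+RC)$ whose $C$-normals are opposite: the $C$-length of $[x_1,x_2]$ is at least $2R$, and it splits as the $C$-length of a chord of $K$ (bounded by $D$) plus a transverse slack bounded by $4r$ absorbed by the inscribed $q+rC$. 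Inequality (2), equivalent to $2R + 4r \leq 3D$, is analogously attacked via the triangle $\mathrm{conv}\{x_1,x_2,x_3\}$ formed by three circumscribed touching points: the sum of the $C$-lengths of its three sides is at most $3D$ (each side being a chord of $K$), while this perimeter is bounded below by $2R + 4r$ by combining the circumscribed configuration with the inscribed containment $q + rC \subseteq K$.

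For the \emph{moreover} claim, the three inequalities cut out, in coordinates $(x,y) = (r/R,\,D/(2R))$, the triangle with vertices $(0,1)$, $(1,1)$, $(1/4, 1/2)$. The vertex $(1,1)$ is realized by any $K$ equal to a translate of $C$; $(0,1)$ by $K$ a segment with arbitrary $C$; and the corner $(1/4, 1/2)$ by $K = T$, $C = -T$ for $T$ any triangle: using the Minkowski asymmetry $s(T) = 2$ one obtains $R(T,-T) = 2$ and $r(T,-T) = 1/2$, while a direct computation (noting that the difference body $(-T)-(-T) = T-T$ is a regular hexagon and that the extremal pairs $x,y \in T$ are vertices of $T$) gives $D(T,-T) = 2$, so $f(T,-T) = (1/4,1/2)$. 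The three boundary arcs are then swept by one-parameter families continuously interpolating between these extremal pairs, with continuity of $r$, $R$, $D$ under the Hausdorff metric ensuring every intermediate point is attained.

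The main obstacle is expected to be inequality (2): while the touching-points framework is by now standard, extracting a sharp interplay between the $R$-information from the circumscribed touching triangle and the $r$-information from the inscribed configuration (so that equality is actually attained at the corner $(1/4,1/2)$) requires delicate geometric estimates, and it is not a priori clear that the perimeter lower bound $2R + 4r$ even holds in full generality. Inequality (3) admits a more direct proof, and once the three extremal pairs are in hand, the construction of the interpolating families for the boundary filling is technically more routine.
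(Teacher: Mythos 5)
Your handling of the first inequality is fine and your identification of the corner point $f(T,-T)=(1/4,1/2)$ is correct, but the core of the proposal has genuine gaps, and the route is very different from the paper's. For the second inequality the paper does not argue via the perimeter of a touching triangle at all: it invokes the established concentricity inequality $s(C)r(K,C)+R(K,C)\le\frac{s(C)+1}{2}D(K,C)$ from \cite{BrGM17_2}, rewrites the normalized left-hand side as $\frac{s}{s+1}r+\frac{1}{s+1}R$, observes that this is non-increasing in $s$ because $r\le R$, and uses $s(C)\le 2$ to get $\frac23 r+\frac13 R\le\frac{D}{2}$. Your proposed lower bound \enquote{perimeter of the touching triangle $\ge 2R+4r$} is exactly the step you flag as unclear, and nothing in your sketch supplies it; as written, inequality (2) is not proved. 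For the third inequality, the configuration you describe --- two circumscribed touching points with \emph{opposite} $C$-normals --- exists only when Proposition \ref{prop:opt_cont_criterion} is witnessed by $k=2$ points, in which case $D(K,C)=2R(K,C)$ and the inequality is trivial; the entire difficulty is the generic case of three touching points with pairwise non-opposite normals, which your \enquote{chord plus transverse slack} sketch does not address. The paper handles it by passing, via Lemma \ref{lem:opt_two_simplices}, to a triangle $T$ optimally contained in a triangle $S$ with $r(T,S)\le r(K,C)$ and $D(T,S)\le D(K,C)$, then symmetrizing $T$ inside $S$ to an isosceles triangle without increasing the inradius or the diameter (Lemmas \ref{lem:New_ineq_isos_triangles_Triang} and \ref{lem:Isosceles_Extreme_Triangle}), and finally exploiting the monotonicity of $y\mapsto\frac y2(1-\frac y2)$.

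The region you describe is also not the diagram. The sharp form of the third inequality actually proved in the paper is $\frac{r(K,C)}{R(K,C)}\ge\frac{D(K,C)}{2R(K,C)}\bigl(1-\frac{D(K,C)}{2R(K,C)}\bigr)$, equivalently $D(2R-D)\le 4rR$; in the coordinates $(x,y)=(r/R,D/(2R))$ the left boundary of $f(\CK^2,\CK^2)$ is therefore the parabolic arc $x=y(1-y)$ joining $(0,1)$ to $(1/4,1/2)$, realized by the isosceles triangles of Lemma \ref{lem:Isosceles_Extreme_Triangle}, and not the chord $x=(1-y)/2$. Points on the open chord violate the parabola inequality and are attained by no pair $(K,C)$, so the interpolating family you propose for that edge cannot exist and your triangle-shaped region strictly contains the true diagram. (You have read the theorem's right-hand side $4r(K,C)D(K,C)$ literally; since $R\le D$ in the plane that linear version is a weaker consequence of the parabola, and only the parabolic form makes the \emph{moreover} claim true.)
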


\begin{figure}
    \centering
    \includegraphics[width=8cm]{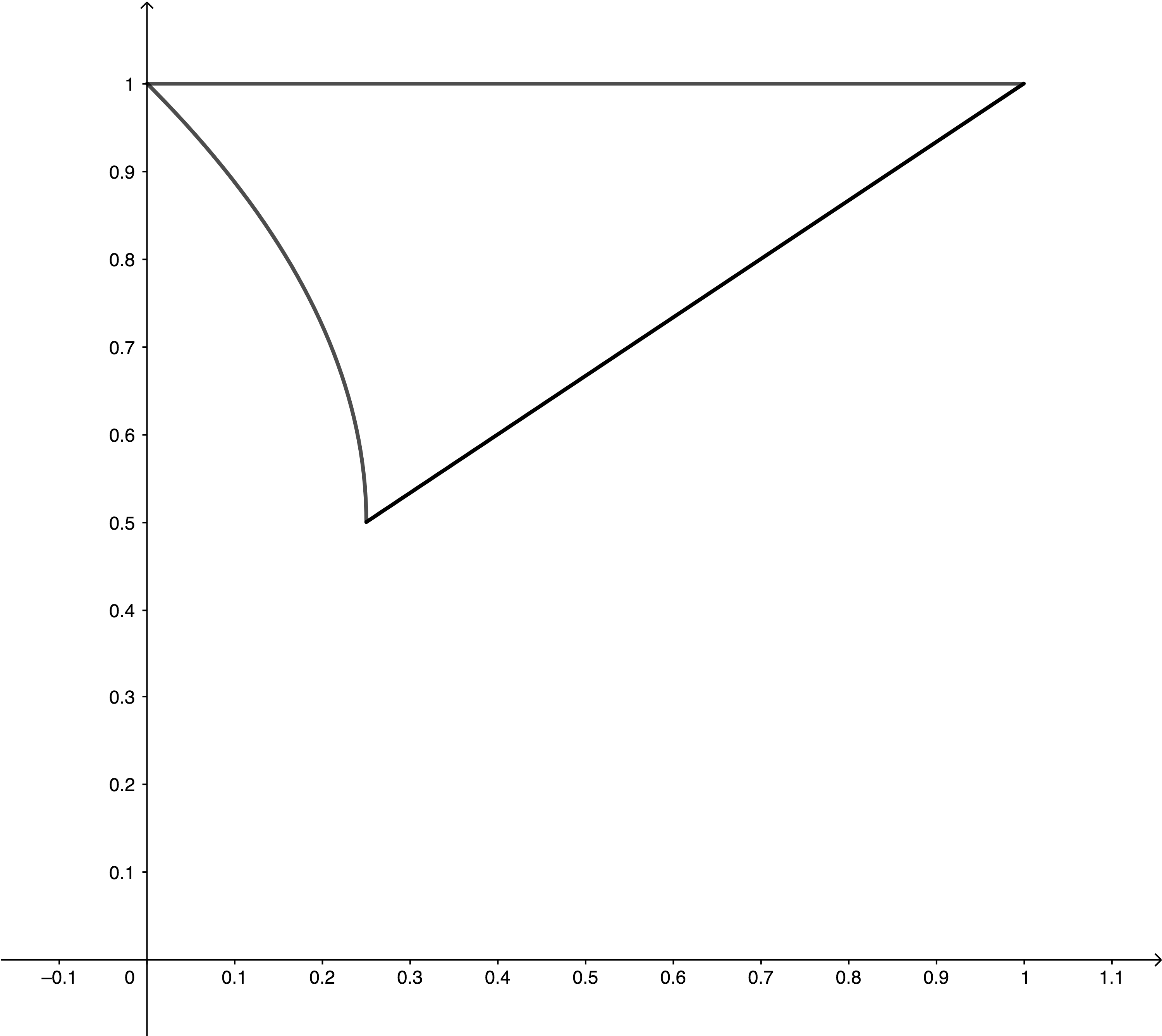}
    \caption{The (r,D,R)-diagram $f(\CK^2,\CK^2)$.}
    \label{fig:TriangleDiagram}
\end{figure}

Notice that the first inequality is basic and well-known, while the third is completely new. The second is the 2-dimensional version of a new inequality that holds true for arbitrary dimensions, which is almost a direct consequence from a result in \cite{BrGM17_2}.

\begin{thm} \label{cor:nr+R<=n+1D/2}
Let $K,C\in\mathcal K^n$. Then
\begin{equation} \label{eq:nr+R<=n+1D/2}
nr(K,C)+R(K,C)\leq\frac{n+1}{2}D(K,C).
\end{equation}
\end{thm}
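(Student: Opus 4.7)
The plan is to combine the Minkowski-type characterization of optimal circumradius containment (the kind of tool systematically developed in \cite{BrGM17_2}) with the diameter and inradius comparisons and the classical Minkowski asymmetry estimate $s(C)\le n$. Since $r$, $R$, $D$ are invariant under translations of $C$, I first translate $C$ to a Minkowski center, so that $h_C(-u)\le n\,h_C(u)$ holds for every direction $u$; then I translate $K$ so that $K\subset R\,C$ with $R=R(K,C)$, and pick $v\in\R^n$ with $v+rC\subset K$. By the optimality characterization, there exist $k+1\le n+1$ contact points $p_0,\dots,p_k\in K\cap R\,\partial C$, outer normals $u_i$ to $C$ at $p_i/R\in\partial C$, and positive weights $\lambda_i$ summing to $1$, with $\sum_i\lambda_iu_i=0$ and $\langle p_i,u_i\rangle=R\,h_C(u_i)$.

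The core step produces a per-normal inequality. For each $i$, choose $c_i^\star\in C$ minimizing $\langle\cdot,u_i\rangle$, so that $\langle c_i^\star,u_i\rangle=-h_C(-u_i)$ and $v+rc_i^\star\in K$. Using $R(\{x,y\},C)=\|x-y\|_{C-C}$ together with $h_{C-C}(u_i)=h_C(u_i)+h_C(-u_i)$, the pair bound $R(\{p_i,v+rc_i^\star\},C)\le D/2$ projected onto $u_i$ reads
\[
R\,h_C(u_i)+r\,h_C(-u_i)-\langle v,u_i\rangle\;\le\;\tfrac{D}{2}\bigl(h_C(u_i)+h_C(-u_i)\bigr).
\]
Multiplying by $\lambda_i$, summing in $i$, and using $\sum_i\lambda_iu_i=0$ to kill the linear-in-$v$ term yields, with $S_\pm:=\sum_i\lambda_i\,h_C(\pm u_i)$,
\[
R\,S_++r\,S_-\;\le\;\tfrac{D}{2}\,(S_++S_-).
\]

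To close the argument, the asymmetry bound $h_C(-u_i)\le n\,h_C(u_i)$ gives $S_-\le n\,S_+$, so $\alpha:=S_-/S_+\le n$; moreover $v+rC\subset K$ forces $2r=D(rC,C)\le D(K,C)$, i.e.\ $r\le D/2$. Dividing the previous estimate by $S_+$ yields $R+\alpha r\le(1+\alpha)\,D/2$, and adding $(n-\alpha)\,r\le(n-\alpha)\,D/2$ delivers the desired $R+nr\le(n+1)\,D/2$. The key nontrivial ingredient — and what makes the statement ``almost a direct consequence'' of a result in \cite{BrGM17_2} — is the Minkowski-type optimality characterization with its positive-weight zero-sum structure $\sum_i\lambda_iu_i=0$ in the required gauge generality; everything else is a convex combination plus the elementary facts $h_C(-u)\le n\,h_C(u)$ and $r\le D/2$.
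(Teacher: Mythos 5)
Your argument is correct, but it takes a more self-contained route than the paper. The paper's proof is a three-line reduction: it quotes the generalized concentricity inequality \eqref{eq:sr+R<=s+1D/2} from \cite{BrGM17_2}, namely $s(C)r(K,C)+R(K,C)\le\frac{s(C)+1}{2}D(K,C)$, and then passes from $s(C)$ to $n$ by observing that $x\mapsto \frac{x}{x+1}r+\frac{1}{x+1}R$ is non-increasing for $x\ge 1$ (using $r\le R$) together with $s(C)\le n$. You instead re-derive the needed concentricity-type estimate from scratch: starting from the contact structure of Proposition \ref{prop:opt_cont_criterion} applied to $K\subset^{opt}RC$, you obtain the per-normal inequalities, average them with the weights $\lambda_i$ so that the $\langle v,u_i\rangle$-terms cancel via $\sum_i\lambda_iu_i=0$, and arrive at $RS_++rS_-\le\frac{D}{2}(S_++S_-)$ with $\alpha=S_-/S_+\le n$; your final relaxation step (adding $(n-\alpha)r\le(n-\alpha)D/2$) is the exact analogue of the paper's monotonicity step, with $r\le D/2$ playing the role of $r\le R$. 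The steps check out: after translating $C$ so that $-C\subset s(C)C$, one indeed has $h_C(u)\ge 0$ for all $u$ (otherwise the width $h_C(u)+h_C(-u)$ would be negative), hence $0\in C$, $S_+>0$ for a fulldimensional gauge, and $h_C(-u)\le s(C)h_C(u)\le n\,h_C(u)$. What your version buys is independence from the cited black box --- you essentially reprove the relevant half of \eqref{eq:sr+R<=s+1D/2}, in the slightly weaker form with $\alpha$ in place of $s(C)$, which suffices here; what the paper's version buys is brevity and the sharper intermediate statement involving the exact asymmetry $s(C)$, which it also needs elsewhere (e.g.\ for the pentagon).
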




Besides showing the validity of the two new inequalities to prove Theorem \ref{thm:dominating_diagram}, we need to see that the boundaries they describe are filled. This is done by the second aspect of our main result.

\begin{thm}\label{thm:BS_diag_simplex}
$f(\mathcal K^2,\mathcal K^2)=f(\mathcal K^2,S)$ for every triangle $S$.
\end{thm}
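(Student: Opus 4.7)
The inclusion $f(\CK^2,S)\subseteq f(\CK^2,\CK^2)$ is immediate, so only the reverse inclusion requires work. Since $r$, $R$, and $D$ are invariant under simultaneous invertible linear transformations of $K$ and $C$, and any two triangles are affinely equivalent, I may fix $S$ to be the equilateral triangle centered at the origin. In the normalised coordinates $(s,t)=(r(K,S)/R(K,S),\,D(K,S)/(2R(K,S)))$ the three inequalities of Theorem~\ref{thm:dominating_diagram} read $t\le 1$, $2s+1\le 3t$, and $2s+t\ge 1$, cutting out the closed triangle with vertices
\[
V_1=(0,1),\qquad V_2=(1,1),\qquad V_3=(1/4,1/2),
\]
and the task is to show every point of this triangle lies in $f(\CK^2,S)$.

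I would begin by realising each vertex explicitly. Taking $K=S$ yields $(1,1)=V_2$; any non-degenerate segment gives $(0,1)=V_1$, since then $r=0$ and $R=D/2$; and $K=-S$ gives $V_3=(1/4,1/2)$, using that the Minkowski asymmetry of a triangle equals $2$ (hence $R(-S,S)=2$ and $r(-S,S)=1/2$), together with $D(-S,S)=D(S,S)$, which holds because $D(\cdot,C)$ depends on the gauge only through the centrally symmetric difference body $C-C$, and $(-S)-(-S)=S-S$.

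Next I would cover each edge by a one-parameter family in $\CK^2$ and check the relevant equality along it. For the edge $V_2V_3$, where $4r+2R=3D$, the natural candidate is the Minkowski family $K_\lambda=\lambda S+(1-\lambda)(-S)$ for $\lambda\in[0,1]$, interpolating between $-S$ and $S$ through the regular hexagon $\tfrac12(S-S)$ at $\lambda=\tfrac12$. For the edge $V_1V_2$, where $D=2R$, a Minkowski combination $K_\lambda=\lambda S+(1-\lambda)\sigma$ of $S$ with a suitable segment $\sigma$ should work. For the edge $V_1V_3$, where $2R-D=4r$, I would deform $-S$ into a segment by shrinking it transversally to one of its sides, producing ``thin anti-triangles'' whose inradius vanishes as they degenerate while $R$ and $D$ remain comparable.

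The two main obstacles I expect are (i) verifying the prescribed boundary equality along each family, especially on $V_1V_3$ whose extremals are the least standard, and (ii) covering the \emph{interior} of the target triangle, since path-connectedness of $f(\CK^2,S)$ together with full boundary coverage does not automatically imply interior coverage. To resolve (ii) I would introduce a two-parameter family such as $K_{\lambda,\mu}=\lambda S+\mu(-S)+(1-\lambda-\mu)\sigma$ with $(\lambda,\mu)$ in the $2$-simplex and $\sigma$ a suitable segment, whose image in the diagram should contain an open subset of the target triangle. Combined with the boundary arcs and the closedness of $f(\CK^2,S)$ (obtained via Blaschke's selection theorem after normalising $R=1$), this yields the entire triangular region.
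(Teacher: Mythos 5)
Your overall strategy (realise the two vertices $(1,1)$ and $(1/4,1/2)$ by $S$ and $-S$, the upper and right edges by the Minkowski families $(1-\lambda)\sigma+\lambda S$ and $(1-\lambda)(-S)+\lambda S$, and then fill in the rest) is the paper's strategy, and your vertex computations are correct. But there is a genuine problem with your identification of the target region. The sharp form of the third inequality --- the one actually proved in the paper via Lemmas \ref{lem:Isosceles_Extreme_Triangle} and \ref{lem:New_ineq_isos_triangles_Triang} and the reduction of Lemma \ref{lem:opt_two_simplices}, namely \eqref{eq:New_Ineq_isosceles_triangle} --- is $\frac{r}{R}\geq\frac{D}{2R}\bigl(1-\frac{D}{2R}\bigr)$, i.e.\ $s\geq t(1-t)$ in your coordinates. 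This is a \emph{parabolic} arc from $(0,1)$ to $(1/4,1/2)$ lying strictly to the right of your chord $V_1V_3$ for $t\in(1/2,1)$ (at $t=3/4$ the parabola forces $s\geq 3/16$ while your line only forces $s\geq 1/8$). Consequently the region cut out is not the Euclidean triangle $V_1V_2V_3$: every point strictly between the chord and the parabola satisfies your three linear constraints but is attained by \emph{no} pair $(K,C)$, so your plan of covering the whole triangle cannot succeed. Concretely, your ``thin anti-triangles'' on the left edge are essentially the isosceles family of Lemma \ref{lem:Isosceles_Extreme_Triangle}, and the (nontrivial) computation there shows they trace the parabola $r=\frac{D}{2}(1-\frac{D}{2})$, not the segment $2s+t=1$; that computation is the real content of the left boundary and cannot be deferred.

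A second, independent gap is the interior-filling step. ``Closed, contains the boundary, and contains an open subset'' does not imply ``equals the whole region'' (the union of the boundary with a small disk is a counterexample), so your resolution of obstacle (ii) is not a proof. Moreover your two-parameter family $\lambda S+\mu(-S)+\nu\sigma$ is hard to control, since $r(\cdot,S)$ and $R(\cdot,S)$ are not additive under general Minkowski sums (only $D$ is). The paper's fix is Lemma \ref{lem:starshaped}, which is exactly the special case where one summand is the gauge itself: after normalising each left-boundary triangle $T$ to $R(T,S)=1$, one has $f((1-\lambda)T+\lambda S,S)=(1-\lambda)f(T,S)+\lambda(1,1)^T$, so the segments joining $(1,1)$ to the points of the parabolic arc sweep out the entire region; since the other two boundary pieces are themselves segments through $(1,1)$, this closes the argument. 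You should replace your step (ii) by this star-shapedness argument and replace the chord $V_1V_3$ by the parabolic arc throughout.
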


Notice that Theorem \ref{thm:BS_diag_simplex}  
shows that the boundary of $f(\CK^2,\CK^2)$ is completely described from investigating a single diagram, choosing a triangle as the gauge $C$.

Having in mind that $f(\mathcal K^2,S)$ contains every other planar $(r,D,R)$-diagram, if $S$ is a triangle, one may wonder whether the same holds true in the case of Minkowski spaces: does there exist $C_0\in\mathcal K^2_0$ such that
$f(\mathcal K^2,\mathcal K^2_0)=f(\mathcal K^2,C_0)$?
We will show
that the right candidate (up to affine transformations) to this covering property would be a \emph{regular hexagon} $H$. 
However, it finally turns out that the answer to this question
in negative, which is a consequence of the following result, which in particular implies that $f(\CK^2,H)$ cannot cover $f(\CK^2,\B_2)$. 

\begin{thm}\label{thm:hexagons}
Let $K\in\mathcal K^2$ and $H\in\mathcal K^2_0$ be such that $H$ is a regular hexagon. If $D(K,H)<2R(K,H)$,
then $r(K,H) \geq R(K,H)/4$.
\end{thm}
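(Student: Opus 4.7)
The plan is to normalize $R(K,H)=1$ so that, after translation, $K\subseteq H$, and then to show that a translate of $\tfrac{1}{4}H$ fits inside $K$. I would begin with the optimal-containment characterization of $R$: there exist at most three touching points in $K\cap\partial H$ whose outer $H$-normals contain the origin in their convex hull. Since the six facet-normals of $H$ come in three opposite pairs, the only three-element subsets with $0$ in their convex hull are the two alternating triples $\{n_1,n_3,n_5\}$ and $\{n_2,n_4,n_6\}$. The hypothesis $D(K,H)<2$ is then decisive: any two points on opposite edges of $H$ lie at $H$-distance exactly $2$, and any vertex of $H$ lies at $H$-distance $2$ from each point of the opposite boundary arc; hence $K$ can touch neither a vertex of $H$ nor two opposite edges. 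Up to a rotation of $H$ by $60^\circ$ (which interchanges the two alternating triples), I may therefore assume $K$ touches only the relative interiors of edges $1,3,5$, at points $p_1=(1-t/2,t\sqrt{3}/2)$, $p_3=(-1+u/2,u\sqrt{3}/2)$ and $p_5=(v,-\sqrt{3}/2)$ with $t,u\in(0,1)$ and $v\in(-1/2,1/2)$.

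Because the $H$-inradius is monotone under inclusion, it suffices to prove the lower bound for the minimal admissible $K$, namely the triangle $T=\operatorname{conv}(p_1,p_3,p_5)$. For such a triangle, the largest $\mu$ with $\mu H + c \subseteq T$ is the optimum of a linear program whose three active constraints correspond to the three edges of $T$: this is a $3\times 3$ linear system in $(c_1,c_2,\mu)$. The support values $h_H$ at the outer normals of $T$'s edges can be evaluated by testing the six vertices of $H$ and turn out to be piecewise linear in $(t,u,v)$, with the active piece determined by which vertex of $H$ attains the maximum. Solving the system in each piece yields $r(T,H)$ as an explicit rational function of $(t,u,v)$, which one must then bound below by $1/4$.

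The main obstacle is the case analysis in the last step. A natural strategy is first to use a symmetrization (reflection across the $y$-axis, interchanging edges $1$ and $3$) to reduce to the one-parameter slice $t=u$, $v=0$, where one obtains the closed form $r(T,H)=(2+t-t^2)/(5-t)$, minimized on $[0,1]$ at $t=0$ with value $2/5>1/4$; and then to handle the asymmetric perturbations either by continuity plus the explicit LP solution or by a small number of algebraic inequalities. The stated bound $r\geq R/4$ in fact holds with considerable slack, since the infimum of $r(T,H)$ over the admissible parameter range appears to be $2/5$, only approached as $D(K,H)\to 2R(K,H)$. Note also that the third inequality of Theorem~\ref{thm:dominating_diagram}, $r(K,C)\geq (2R(K,C)-D(K,C))/4$, already gives the desired bound when $D(K,H)\leq R(K,H)$; so only the range $R<D<2R$ genuinely requires the hexagon-specific argument sketched above.
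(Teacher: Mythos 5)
Your setup coincides with the paper's: normalize so that $K\subset^{opt}H$, invoke the optimal-containment criterion (Proposition \ref{prop:opt_cont_criterion}), use $D(K,H)<2R(K,H)$ to rule out touching points at vertices or on opposite edges (your distance-$2$ computations here are correct), conclude that the touching points lie in the relative interiors of three alternating edges, and reduce to the triangle $T=\conv(\{p_1,p_3,p_5\})\subseteq K$ via monotonicity of the inradius. Up to this point you are in step with the paper. The divergence, and the problem, is in the final step.

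The gap is that the bound for the full three-parameter family $(t,u,v)$ is never established. Your proposed reduction to the slice $t=u$, $v=0$ by \enquote{symmetrization (reflection across the $y$-axis)} is only asserted: reflecting $T$ produces a different triangle, and it is not explained why the symmetrized configuration minimizes $r(T,H)$ among admissible triangles with the same constraints, nor what \enquote{continuity plus the explicit LP solution} would yield beyond a restatement of the problem. This asymmetric case is precisely the content of the theorem; the paper handles it not by computation but by a synthetic argument (supporting lines $L_2,L_3$ of the inball through $p^2,p^3$ are shown to be unable to meet at a common point of the third edge when $r<1/4$), which sidesteps the case analysis you correctly identify as the main obstacle. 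Moreover, your closed form on the slice is itself only half right: the active vertex of $H$ in the support values changes at $t=1/2$, so $r(T,H)=(2+t-t^2)/(5-t)$ holds only for $t\in[0,1/2]$, while for $t\in[1/2,1]$ one gets $(2+t-t^2)/(4+t)$, which decreases back to $2/5$ at $t=1$ (consistent with the family $T_\lambda$ in the paper's remark after the proof). The slice minimum is still $2/5$, so the conclusion there survives, but the error shows the piecewise structure is more delicate than your sketch suggests and would have to be tracked through the full $(t,u,v)$-analysis. Your closing observation that the third inequality of Theorem \ref{thm:dominating_diagram} settles the range $D\le R$ is correct and a nice reduction, but the remaining range $R<D<2R$ is exactly where the work lies and where your argument is incomplete.
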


Let us mention at this point that it is well known that $D(K,C)=2R(K,C)$ whenever both, $K,C$ are symmetric. Thus, replacing also $\mathcal K^n$ above by $\mathcal K^n_0$ in the first argument gives $f(\mathcal K^n_0,\mathcal K^n_0)=f(\mathcal K^n_0,C) = \conv\left(\set{\left(\begin{smallmatrix} 0 \\ 1 \end{smallmatrix}\right),\left(\begin{smallmatrix} 1 \\ 1 \end{smallmatrix}\right)}\right)$ for all $C \in \CK^n_0$. 
However, the next result shows that 
if we do not restrict the first argument this unusual $1$-dimensional behaviour 
only occurs when the gauge $C$ is a parallelotope. 

\begin{thm}\label{thm:squares}
Let $C\in\CK^n$. Then $\conv\left(\set{\left(\begin{smallmatrix} 0 \\ 1 \end{smallmatrix}\right),\left(\begin{smallmatrix} 1 \\ 1 \end{smallmatrix}\right)}\right) \subset f(\CK^n,C)$ and equality holds if and only if $C$ is a parallelotope. 
\end{thm}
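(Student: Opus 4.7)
My approach splits the claim into three ingredients: the universal inclusion $\conv\{(0,1),(1,1)\} \subset f(\CK^n, C)$, the equality for $C$ a parallelotope, and the strict inclusion otherwise.

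For the universal inclusion I pick $x_0, y_0 \in C$ with $\|x_0 - y_0\|_{C-C} = 1$; such a pair exists because $C - C = \{x - y : x, y \in C\}$, so every boundary direction of $C - C$ is realised as a difference of two points of $C$. For $\rho \in [0, 1]$ I set
\[
K_\rho := \rho C + (1-\rho)\,[x_0, y_0].
\]
Convexity of $C$ together with $[x_0, y_0] \subset C$ give $K_\rho \subset C$, so $R(K_\rho, C) \leq 1$; meanwhile $x_0, y_0 \in K_\rho$ force $D(K_\rho, C) \geq 2\|x_0 - y_0\|_{C-C} = 2$, and the basic inequality $D \leq 2R$ then pins $R(K_\rho, C) = 1$ and $D(K_\rho, C) = 2$. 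The inclusion $\rho C + (1-\rho) x_0 \subset K_\rho$ bounds $r(K_\rho, C) \geq \rho$, and the width identity $w(K_\rho, u) = \rho\, w(C, u) + (1-\rho)|u \cdot (x_0 - y_0)|$, evaluated on a direction $u$ orthogonal to $y_0 - x_0$ (available for $n \geq 2$; the case $n = 1$ is trivial since $C$ is then a segment), yields $r(K_\rho, C) \leq \rho$. Hence $f(K_\rho, C) = (\rho, 1)$ for every $\rho \in [0, 1]$.

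If $C$ is a parallelotope, an affine change of coordinates lets me assume $C = [-1, 1]^n$; then $C - C = 2C$ and $\|\cdot\|_{C - C} = \tfrac{1}{2}\|\cdot\|_\infty$, so $D(K, C)$ equals the largest coordinate-width of $K$, which is precisely $2R(K, C)$. Hence $D = 2R$ for every $K$, pinning $f(\CK^n, C)$ inside the horizontal segment $\{D = 2R\}$; combined with the previous paragraph this yields the stated equality.

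The main obstacle is the converse: if $C$ is not a parallelotope, I must exhibit some $K$ with $D(K, C) < 2R(K, C)$. I would split into two subcases. When $C$ is not centrally symmetric, take $K = -C$: symmetry of $\|\cdot\|_{C-C}$ gives $D(-C, C) = 2\max_{x, y \in C}\|x - y\|_{C-C} = 2$, while $R(-C, C) = s(C) > 1$ where $s(C)$ denotes the Minkowski asymmetry, which strictly exceeds $1$ exactly when $C$ fails to be symmetric. When instead $C$ is symmetric but not a parallelotope, $R(\cdot, C)$ is the Chebyshev radius and $D(\cdot, C)$ the diameter in the norm $\|\cdot\|_C$, so $D = 2R$ throughout would force the Jung constant of $(\R^n, \|\cdot\|_C)$ to equal $1/2$; by the classical Lindenstrauss--Nachbin type characterisation, a finite-dimensional normed space has this property only if its unit ball is a parallelotope, contradicting the assumption. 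Invoking this characterisation is the one non-routine ingredient; alternatively, one could pass to a $2$-plane in which the section of $C$ is not a parallelogram and directly construct three points there forming a $\|\cdot\|_C$-equilateral triangle whose Chebyshev radius exceeds half the common side length.
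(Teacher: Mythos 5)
Your first two ingredients are fine and in fact run parallel to the paper: the family $K_\rho=\rho C+(1-\rho)[x_0,y_0]$ is exactly the interpolation between a diametrical segment and $C$ that the paper obtains from Lemma \ref{lem:starshaped}, and your direct computation that $D=2R$ when $C=[-1,1]^n$ is a cleaner version of the paper's ``(i) $\Rightarrow$ (ii)'' in Lemma \ref{lem:parallelotopes}. Your treatment of the non-symmetric case via $K=-C$ (so that $D(-C,C)=2<2s(C)=2R(-C,C)$) is correct and is a nice elementary shortcut that the paper does not use.

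The genuine gap is the symmetric, non-parallelotope case, which is precisely the content the paper set out to prove. You dispose of it by citing a ``classical Lindenstrauss--Nachbin type characterisation,'' but that circle of results (Nachbin/Kelley, $\mathcal P_1$-spaces) concerns the binary intersection property for balls of \emph{arbitrary} radii, i.e.\ $1$-injectivity, whereas the hypothesis you actually have is only that every set of diameter $d$ lies in a ball of radius $d/2$ --- equivalently, that pairwise intersecting families of \emph{equal} balls have a common point, i.e.\ $\him(C)=1$. Passing from the equal-radii statement to the one you cite is not immediate, and the equal-radii characterisation ($\him(C)=1$ iff $C$ is a parallelotope, due to Sz\"okefalvi-Nagy) is exactly the fact the paper invokes and, via the minimal-dependence argument in Lemma \ref{lem:parallelotopes}, reproves because the authors could not locate a published proof of the $D=2R$ formulation. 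So as written you have assumed the crux; to close the gap you should either make the reduction to $\him(C)=1$ explicit and cite Sz\"okefalvi-Nagy, or reproduce an argument like the paper's (smooth boundary points $p^1,\dots,p^{k+1}$ witnessing $\mathrm{md}(C)=k>1$, whose convex hull has a diametrical pair forcing $0\in\conv(\set{u^1,u^2})$, a contradiction).

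Your proposed fallback --- pass to a $2$-plane where the section of $C$ is not a parallelogram and build an equilateral triangle there with large Chebyshev radius --- does not work. First, the circumradius of a planar set $K\subset P$ with respect to $C$ is in general \emph{strictly smaller} than its circumradius with respect to the section $C\cap P$, because the optimal circumscribing translate $c+\lambda C$ need not have $c\in P$. Second, the hypothesis is vacuous as a discriminator: the cube $[-1,1]^3$ has regular hexagonal central sections, so a non-parallelogram section exists even when $C$ \emph{is} a parallelotope; the equilateral triangle you would build in such a hexagonal section has large circumradius relative to the hexagon but satisfies $D=2R$ relative to the cube. Hence that route cannot substitute for the missing characterisation.
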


Essentially, this is a direct corollary from the well known characterization that parallelotopes are exactly the class of sets with Helly dimension 1 and the theorem even stays true when replacing the inradius by any of the functionals area, perimeter, or (minimal) width. Anyway, since we are not aware of a published proof of the above fact, we will close this gap here.



Theorems \ref{thm:dominating_diagram}, \ref{thm:hexagons}, and \ref{thm:squares} show that Blaschke-Santal\'o diagrams of some $k$-gons, (regular) triangles, regular hexagons and squares) encode extreme behaviours. In the cases of fulldimensional diagrams 
they share many common properties. For instance, restricting $K\in\mathcal K^2$ to triangles whenever $C$ is either the euclidean ball $\B_2$ \cite{Ju01}, a triangle \cite{BrKo13}, or a regular hexagon \cite{BrGM17_3}, the supremum over the choices of $K$ of the ratio of the circumradius $R(K,C)$ and the diameter $D(K,C)$ (the so called \emph{Jung constant} of the gauge $C$) is attained if and only if $K$ is an \emph{equilateral} triangle, i.e.~a triangle whose edges all have the same length with respect to $C$. Even so it is easy to argue that the leftmost point in the Jung-boundary must be reached by a triangle it is not always equilateral, as the regular pentagon $C$ shows.

\begin{thm}\label{thm:pentagons}
Let $K,P\in\mathcal K^2$ such that $P$ is a regular pentagon. Then
\[
R(K,P) \leq \frac{1+\sqrt{5}}{2}D(K,P).
\]
Moreover, there exist triangles $T$ and $T'$ attaining equality above, such that $T$ is an isosceles triangle, with exactly two diametrical edges, whereas $T'$ is an equilateral one.
\end{thm}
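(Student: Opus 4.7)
The plan is to reduce the inequality to an extremization over triangles inscribed in $P$, and then exploit the pentagon's dihedral symmetry and golden-ratio structure. First, I would invoke the optimal-containment characterization of the circumradius: for each $K$ with $R(K,P)=1$ (after translating so that $K\subseteq P$), there exist contact points $p_1,p_2,p_3\in\partial K\cap\partial P$ and positive weights $\lambda_i$ with $\sum_i\lambda_i n_i=0$, where $n_i$ is the outer normal to $P$ at $p_i$. The triangle $T=\conv\{p_1,p_2,p_3\}\subseteq K$ inherits this contact configuration, so $R(T,P)=R(K,P)=1$, while the inclusion $T\subseteq K$ together with the definition of $D$ as a maximum over pairs of points immediately yields $D(T,P)\leq D(K,P)$. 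Consequently $R(T,P)/D(T,P)\geq R(K,P)/D(K,P)$, and it is enough to establish the inequality for such inscribed triangles.

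A short computation shows $R(\{x,y\},P)=\|y-x\|_{P-P}$, the Minkowski functional of the difference body; hence $D(T,P)=2\max_{i<j}\|v_j-v_i\|_{P-P}$ for the triangle's vertices $v_1,v_2,v_3$. For a regular pentagon, $P-P$ is a centrally symmetric decagon whose chord-length function is controlled by the golden ratio: notably, a pentagon diagonal is exactly $\varphi=(1+\sqrt{5})/2$ times a pentagon side, and the pentagon's diagonals are parallel to the opposite edges. Modulo the pentagon's dihedral symmetry, the inscribed triangles satisfying the normal condition split into finitely many combinatorial types (parameterized by which edges or vertices of $P$ each $p_i$ touches); within each type the ratio $R(T,P)/D(T,P)$ becomes an explicit smooth function of a few geometric parameters, amenable to analytic extremization.

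The principal obstacle is that the extremum is realized simultaneously at two geometrically distinct triangles, so one must rule out every other critical configuration and show that these two give the same value. I would identify them as the isosceles $T$ with two diametrical edges (both realizing $D(T,P)/2$ in the $(P-P)$-norm, aligned with the longest chords of $P-P$) and the equilateral $T'$ in the pentagon norm, whose three edges contribute equally to $D(T',P)$. To conclude, I would tabulate the critical configurations in each combinatorial type, use the pentagon's $\varphi$-identities to bound $R(T,P)/D(T,P)$ by $(1+\sqrt{5})/2$ uniformly, and verify by direct computation on $T$ and $T'$ that this bound is attained, thereby establishing both the inequality and the equality clause of the theorem.
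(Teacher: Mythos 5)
Your opening reduction is exactly the paper's: translate so that $K\subset^{opt}P$, extract contact points via Proposition \ref{prop:opt_cont_criterion}, pass to the inscribed triangle $T$ of contact points with $R(T,P)=R(K,P)$ and $D(T,P)\le D(K,P)$, and express $R(\{x,y\},P)$ through the Minkowski functional of $P-P$. From that point on, however, your argument is a plan rather than a proof. The entire quantitative content of the theorem sits in the claim that every admissible contact configuration satisfies $D(T,P)\ge\frac{1+\sqrt{5}}{2}R(T,P)$, and you defer this to an unspecified \enquote{tabulation of critical configurations} and \enquote{analytic extremization} over combinatorial types that is never carried out. Nothing in the proposal identifies which triples of edges of $P$ can actually carry the contact points (the condition $0\in\conv(\{u^1,u^2,u^3\})$ forces, up to the dihedral symmetry, essentially one type: two points on the adjacent edges $[p^1,p^2]$ and $[p^1,p^5]$ and one on the opposite edge $[p^3,p^4]$), nor does it produce the lower bound on the diameter in that configuration. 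Note also that $D(T,P)$ is a maximum of piecewise-linear functions of the contact points, so the \enquote{explicit smooth function amenable to analytic extremization} framing already needs care.

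The paper avoids the case analysis you envisage with a single monotonicity argument: for $q^1\in[p^3,p^4]$ (say closer to $p^3$) and \emph{any} $x\in[p^1,p^5]$, comparing support functions in the direction $p^3$ gives $D([x,q^1],P)\ge 2\,h([p^{15}-p^{34},p^{34}-p^{15}],p^3)/h(P-P,p^3)=\frac{1+\sqrt{5}}{2}$, so two of the three contact points already force the bound. Your treatment of the equality clause has the same character: you describe $T$ and $T'$ qualitatively but never construct them or verify anything, whereas the paper exhibits $T=\conv(\{p^{34},p^{15},p^{12}\})$ (edge midpoints; isosceles with exactly two diametrical edges) explicitly and obtains the equilateral $T'$ as $T_{\lambda_0}$ with $\lambda_0=(3-\sqrt{5})/4$ by a continuity argument that makes the third edge diametrical as well. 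A final caution: the inequality as you (following the statement) aim to prove it, $R\le\frac{1+\sqrt{5}}{2}D$, is weaker than the trivial $R\le D$; what the argument actually establishes is $D(K,P)\ge\frac{1+\sqrt{5}}{2}R(K,P)$, i.e.\ $R\le\frac{\sqrt{5}-1}{2}D$, and any extremization must target that form of the bound.
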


The paper is organized as follows. In Section \ref{sec:preliminaries}, we explain the notations and preliminary results needed along the paper, accompanied by two technical Lemmas.
In Section \ref{sec:parallelotopes} we prove Theorem \ref{thm:squares} using a characterization of parallelotopes and the fact that the intersection of $(r,D,R)$-diagrams equals the diagram of any single parallelotope.
Section \ref{sec:proofsMainResults} treats the main result of the paper, namely, the inequalities exposed in Theorems \ref{thm:dominating_diagram} and \ref{cor:nr+R<=n+1D/2}, and the fact that the union of all $(r,D,R)$-diagrams in the planar case equals the diagram of any single triangle in Theorem \ref{thm:BS_diag_simplex}.
Finally in Section \ref{sec:456}, we show that when restricting to centrally symmetric containers the union of diagrams is no longer given by a single diagram, where the regular hexagon plays a crucial role  (\cf~Theorem \ref{thm:hexagons}). Moreover, we also show that the $(r,D,R)$-diagram of the regular pentagon has a Jung-extreme triangle which is not equilateral (\cf~Theorem \ref{thm:pentagons}).

\section{Notation and preliminary results}\label{sec:preliminaries}

For every $K,C\in\mathcal K^n$ let $K+C=\{x+y:x\in K,\,y\in C\}$ be the \emph{Minkowski sum} of $K$ and $C$ and for every $\lambda \in \R$ let $\lambda K=\{\lambda x :x \in K\}$ be the $\lambda$-dilatation of $K$. We use $-K :=(-1)K$ for short.

For every $X\subset\R^n$, let $\conv(X)$, $\aff(X)$ and $\pos(X)$ be the \emph{convex, affine}, and \emph{positive hull} of $X$, respectively. Moreover, for any $x,y\in\R^n$, let $[x,y]:=\conv(\{x,y\})$ be the \emph{line segment} with endpoints $x$ and $y$.

The circumradius is homogeneous of degree $1$ and monotonically increasing on its first entry, whereas it is homogeneous of degree $-1$ and monotonically decreasing on its second entry, i.e.~for every $K_1,K_2,C_1,C_2\in\mathcal K^n$ with $K_1\subset K_2$, $C_2\subset C_1$, and $\lambda>0$, we have
\[
R(K_1,C_1) \subset R(K_2,C_1) \subset R(K_2,C_2)
\quad \text{and} \quad
R(\lambda K_1,C_1)=R(K_1,\frac1\lambda C_1)=\lambda R(K_1,C_1).
\]
Notice that those and the remaining properties of the circumradius are inherited by the inradius and the diameter. This can easily be seen due to the fact that $r(K,C)=R(C,K)^{-1}$ and $D(K,C)=2\max_{x,y\in K}R(\{x,y\},C)$ \cite{BrKo15}.
Notice also that $D(K,C)=D([x,y],C)$ for some \emph{extreme points} $x,y$ of $K$, i.e.~such that no line segment containing $x$ or $y$ on its interior can be contained in $K$ \cite{GrKl}.

For every $K \in \mathcal K^n$, let $\bd(K)$ be the \emph{boundary} of $K$ and if $p\in\bd(K)$ let $N(K,p)$ be the (outer) \emph{normal cone} of $K$ at $p$, i.e.~$N(K,p)=\{u\in\R^n : u^T(x-p) \leq 0 \text{ for all } x \in K\}$.
Moreover, for every $u\in\R^n$ let $h(K,u)$ be the \emph{support function} of $K$ at $u$, defined by $h(K,u) := \sup\{x^Tu : x\in K\}$.
Using the support function, it is well known that the diameter can also be expressed by \[
D(K,C)=2\sup_{u\in\R^n \setminus \set{0}} \frac{h(K-K,u)}{h(C-C,u)}
\] 
for every $K,C\in\mathcal K^n$ \cite{BrKo15}.

We say that $K\subset^{opt}C$ if $K\subset C$ and $K\not\subset c+\lambda C$
for any $\lambda\in(0,1)$ and $c\in\R^n$. The situation in which $K\subset^{opt}C$ is characterized by a touching condition between a finite amount of boundary points of $K$ and $C$ \cite[Theorem 2.3]{BrKo13}.
\begin{prop}\label{prop:opt_cont_criterion}
Let $K,C \in \CK^n$ with $K \subset C$. Then the following are equivalent:
\begin{enumerate}[(i)]
    \item $R(K,C)=1$.
    \item There exist $p^1,\dots,p^k\in K\cap\bd(C)$, for some $k\in\{2,\dots,n+1\}$, 
    and $u^j\in N(C,p^j)$, $j\in[k]$, such that $0\in\conv(\{u^j:j\in[k]\})$.
\end{enumerate}
\end{prop}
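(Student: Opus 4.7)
The plan is to prove the two implications of the proposition separately, with (ii)$\Rightarrow$(i) being direct and (i)$\Rightarrow$(ii) requiring a separation theorem together with a compactness argument.

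For (ii)$\Rightarrow$(i), after a harmless translation of $C$ ensuring $0 \in \mathrm{int}(C)$, so that $h(C,u) > 0$ for every $u \neq 0$, I would suppose towards a contradiction that $K \subset c + \lambda C$ for some $c \in \R^n$ and some $\lambda < 1$. For each $j$, the inclusion $p^j \in c + \lambda C$ combined with the identity $h(C,u^j) = (p^j)^T u^j$, which is the defining property of $u^j \in N(C,p^j)$, yields $(1-\lambda)\,h(C,u^j) \leq c^T u^j$. Averaging with nonnegative weights $\alpha_j$ summing to one and realising $\sum_j \alpha_j u^j = 0$ produces $(1-\lambda)\sum_j \alpha_j h(C,u^j) \leq 0$, which contradicts $\lambda < 1$ together with $\sum_j \alpha_j h(C,u^j) > 0$.

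For the converse (i)$\Rightarrow$(ii) I would argue by contraposition. Assume (ii) fails and consider
\[
M := \mathrm{conv}\Bigl(\bigcup_{p \in K \cap \bd(C)} (N(C,p)\setminus\{0\})\Bigr).
\]
Carath\'eodory's theorem reduces any representation of $0$ as a convex combination of nonzero elements of $M$ to at most $n+1$ summands, and forces $k \geq 2$ automatically since no single nonzero vector can equal $0$; hence the failure of (ii) is equivalent to $0 \notin M$. A separation argument then produces $d \in \R^n \setminus \{0\}$ with $d^T u > 0$ for every outer normal $u$ at every touching point, and combining this with upper semicontinuity of $p \mapsto N(C,p)$ and compactness of $K \cap \bd(C)$ upgrades it to a uniform estimate $d^T u \geq \delta\|u\|$ for some $\delta > 0$.

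The main obstacle is converting this uniform separation into an honest shrinking of $K$ inside $C$. I would translate $K$ by $-\epsilon d$ and verify that, for sufficiently small $\epsilon > 0$, $K - \epsilon d \subset (1-\epsilon') C$ for some $\epsilon' > 0$, contradicting $R(K,C) = 1$. In a neighbourhood of the compact set $K \cap \bd(C)$, the bound $d^T u \geq \delta\|u\|$ on the active normals forces the gauge $\|\cdot\|_C$ to decrease at first order under the translation; on the compact complement $\{x \in K : \|x\|_C \leq 1 - \eta\}$, where $\|\cdot\|_C$ is already uniformly below $1$, continuity ensures the translate stays below $1-\epsilon'$. Patching these two local estimates via a finite covering of $K$ yields the required $\epsilon'$ and completes the contrapositive.
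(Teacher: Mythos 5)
The paper offers no proof of this proposition to compare against: it is quoted from \cite{BrKo13} (Theorem 2.3) as a known optimal-containment criterion. Your proposal is therefore judged as a self-contained proof, and its overall architecture is the classical one. The implication (ii)$\Rightarrow$(i) is complete and correct, granting the implicit convention that the $u^j$ are nonzero (without which statement (ii) would be vacuous and the proposition false).

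In the converse direction one step fails as literally written. Whenever $K$ touches $\bd(C)$, the set $M=\conv\bigl(\bigcup_{p}(N(C,p)\setminus\set{0})\bigr)$ always contains $0$ in its closure, since each normal cone is a cone and hence contains nonzero vectors of arbitrarily small norm; consequently $0\notin M$ cannot yield, via any separation theorem, a vector $d$ with $d^Tu>0$ for all outer normals $u$ --- weak separation only gives $d^Tu\geq 0$, and the subsequent ``upgrade'' to $d^Tu\geq\delta\|u\|$ does not follow. The repair is standard but must precede the separation rather than follow it: work instead with $U:=\conv\bigl(\bigcup_{p\in K\cap\bd(C)}\bigl(N(C,p)\cap\set{u\in\R^n:\|u\|=1}\bigr)\bigr)$. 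Upper semicontinuity of $p\mapsto N(C,p)$ together with compactness of $K\cap\bd(C)$ makes $U$ compact; Carath\'eodory plus a rescaling of the convex coefficients shows $0\in U$ if and only if (ii) holds; strict separation of $0$ from the compact convex set $U$ then delivers exactly your uniform bound $d^Tu\geq\delta\|u\|$. A second, smaller point concerns the final perturbation: the usable subgradient inequality is $\|x-\epsilon d\|_C\leq\|x\|_C-\epsilon\, u^Td$ with $u$ a normal at the radial projection of $x-\epsilon d$ onto $\bd(C)$, which lies near but generally not in $K\cap\bd(C)$; you need the upper semicontinuity once more to extend $d^Tu\geq\tfrac{\delta}{2}\|u\|$ to normals at all boundary points in a neighbourhood of the touching set before the patching argument closes. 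With these two adjustments the proof is correct.
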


In particular, this means that the circumradius is affine invariant, i.e.~for every affine transformation $A$ and $K,C\in\mathcal K^n$ we have $R(A(K),A(C))=R(K,C)$. The same holds for the inradius and diameter.

For the next lemma, one should recognize that using Caratheodory's Theorem we can assume that the points $p^i$ as well as the outer normals $u^i$ in Proposition \ref{prop:opt_cont_criterion} can be chosen affinely independent.

\begin{lem}\label{lem:opt_two_simplices}
Let $K,C \in\CK^n$ be such that $K \subset^{opt} C$. Then
there exist an $\ell$-dimensional simplex $T \in \CK^n$ and
a generalized prism $S \in \CK^n$ with $\ell$-dimensional simplicial base,
such that $T\subset K \subset C\subset S$ for some $\ell \in [n]$, fulfilling
\[
R(T,S)=1,\quad r(T,S) \leq r(K,C),\quad\text{and}\quad D(T,S) \leq D(K,C).
\]
Moreover, if $C=-C$, then we obtain the same conclusion within the chain of inclusions
$T\subset K\subset C\subset S\cap(-S)$, i.e. replacing $S$ by $S\cap(-S)$.
\end{lem}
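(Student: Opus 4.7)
The plan is to apply Proposition \ref{prop:opt_cont_criterion} together with Carath\'eodory's theorem (as noted in the paragraph preceding the lemma) to obtain affinely independent contact points $p^1,\dots,p^k \in K \cap \bd(C)$ with affinely independent outer normals $u^j \in N(C,p^j)$, $j \in [k]$, satisfying $0 \in \conv(\set{u^j : j \in [k]})$, for some $k \in \set{2,\dots,n+1}$. I would then set $\ell := k-1 \in [n]$.

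Next I would define
\[
T := \conv(\set{p^1,\dots,p^k}) \quad \text{and} \quad S := \bigcap_{j=1}^{k} \set{x \in \R^n : (u^j)^T x \leq (u^j)^T p^j}.
\]
Affine independence of the $p^j$'s makes $T$ an $\ell$-dimensional simplex with $T \subset K$ (each $p^j \in K$), while $u^j \in N(C,p^j)$ gives $C \subset S$, so $T \subset K \subset C \subset S$. To see that $S$ is a generalized prism with $\ell$-dimensional simplicial base, I would observe that the relation $0 \in \conv(\set{u^j : j \in [k]})$ combined with the affine independence of the $u^j$'s forces them to span an $\ell$-dimensional linear subspace $V$; within $V$, the $k$ halfspace constraints cut out a bounded region with exactly $k$ facets, i.e.\ an $\ell$-simplex, while $V^\perp$ lies in the recession cone of $S$.

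The three required properties now follow directly. Since $p^j \in T \cap \bd(S)$ with $u^j \in N(S,p^j)$ and $0 \in \conv(\set{u^j : j \in [k]})$, another application of Proposition \ref{prop:opt_cont_criterion} yields $R(T,S) = 1$. The inclusions $T \subset K$ and $C \subset S$, combined with the fact that both $r$ and $D$ are monotonically increasing in the first argument and monotonically decreasing in the second, immediately give $r(T,S) \leq r(K,C)$ and $D(T,S) \leq D(K,C)$.

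For the symmetric case, I would translate so that $C = -C$ is centered at the origin, which forces $(u^j)^T p^j > 0$ for all $j$. Then $C = -C \subset -S$, so $C \subset S \cap (-S)$ and the chain $T \subset K \subset C \subset S \cap (-S)$ is preserved. Each $p^j$ still lies in $\bd(S \cap (-S))$ with the same outer normal $u^j$ (the supporting halfspace of $S$ at $p^j$ remains a supporting halfspace of the smaller set $S \cap (-S)$), so Proposition \ref{prop:opt_cont_criterion} again gives $R(T, S \cap (-S)) = 1$, and the two remaining inequalities follow by the same monotonicity argument applied to $T \subset K$ and $C \subset S \cap (-S)$. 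The main obstacle, in my view, is not the monotonicity-based inequalities but rather the verification that $S$ is genuinely a generalized prism with $\ell$-dimensional simplicial base; this step is where the affine independence of the outer normals must be used jointly with the convex-hull condition $0 \in \conv(\set{u^j : j \in [k]})$.
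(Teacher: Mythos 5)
Your proposal is correct and follows essentially the same route as the paper's proof: extract affinely independent contact points and normals via Proposition \ref{prop:opt_cont_criterion} and Carath\'eodory, set $T:=\conv(\set{p^1,\dots,p^k})$ and $S:=\bigcap_{j=1}^k\set{x : (u^j)^Tx \leq (u^j)^Tp^j}$, and conclude by monotonicity and a second application of the containment criterion, with $C=-C\subset -S$ handling the symmetric case. The paper states these conclusions as immediate; your added verification that $S$ is a generalized prism over an $\ell$-simplex (using that $0$ lies in the relative interior of $\conv(\set{u^j})$ after the Carath\'eodory reduction, so the $u^j$ positively span their $\ell$-dimensional linear hull) is a legitimate filling-in of that detail, not a deviation.
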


\begin{proof}
Since $K \subset^{opt} C$ we know from Proposition \ref{prop:opt_cont_criterion} that there exist $p^1,\dots,p^k \in K\cap\bd(C)$, $k \in\set{2,\dots,n+1}$, and $u^j\in N(C,p^j)$, $j \in [k]$, such that $0 \in \conv(\set{u^1,\dots,u^k})$.
We immediately obtain the claimed result from defining $\ell := k-1$,  
$T:=\conv(\set{p^1,\dots,p^k})$, and $S:=\bigcap_{j=1}^k\set{x\in\R^n : (u^j)^Tx \leq (u^j)^Tp^j}$. 
Moreover, if $C=-C$ then $C\subset S$  directly implies $C\subset -S$, too, and therefore $C\subset S\cap(-S)$.
\end{proof}

Just combining the definition of the diameter and the monotonicity of the circumradius we directly deduce that
\begin{equation}\label{eq:D<=2R}
D(K,C) \leq 2\,R(K,C).
\end{equation}
Equality in \eqref{eq:D<=2R} holds, for instance, whenever $K=(1-\lambda)[x,y]+\lambda C$, for some $x,y\in\mathbb R^n$ and $\lambda\in[0,1]$. Seeking for a reverse inequality to the one above, led to the \emph{Jung constants} 
\[
j_C=\sup\{R(K,C)/D(K,C):K\in\mathcal K^n\},
\]
referring to Jung who showed that
$j_{\mathbb B_2}=\sqrt{n/(2(n+1))}$ \cite{Ju01}. Bohnenblust proved
that 
\begin{equation}\label{eq:Bohnenblust}
j_C\leq n/(n+1)
\end{equation}
whenever $C\in\mathcal K^n_0$ \cite{Bo38}. Moreover,
one has $R(K,C)=n/(n+1)D(K,C)$ if and only if $K$ is an $n$-dimensional \emph{simplex},
i.e.~the convex hull of $n+1$ affinely independent points, with barycenter $0$ 
and $C$ fulfills 
\begin{equation}\label{eq:equalityBohnenblust}
    K-K \subset D(K,C)C \subset (n+1)(K\cap(-K))
\end{equation} 
(see \cite[Corollary 2.9]{BrGM17_3}). More generally, we also know that $j_C\leq n/2$, with $2R(K,C)=nD(K,C)$ if $K=-C$ is a fulldimensional simplex \cite{BrKo13}.

Maybe, the first non trivial inequality relating all three functionals was the so called \emph{concentricity inequality}
(proven in \cite{Sa61} for the euclidean plane, in \cite{Vr81} for general euclidean spaces, and in \cite{MoPaPh} in the general case), 
which states that for any $K\in\mathcal K^n$ and $C\in\mathcal K^n_0$ we have
\begin{equation}\label{eq:r+R<=D}
r(K,C)+R(K,C) \leq D(K,C).
\end{equation}
For every $C\in\CK^n$, the \emph{asymmetry measure of Minkowski} $s(C)$, or \emph{Minkowski asymmetry} for short, is the smallest $\lambda \ge 0$ such that $\lambda C$ contains some translate of $-C$, i.e.~$s(C)=R(-C,C)$ (see \cite{Gr36}).
It is well known that $s(C) \ge 1$, with equality if and only if $C \in\CK^n_0$, and $s(C) \le n$, with equality if and only if $C$ is a fulldimensional simplex. 

Making use of the Minkowski asymmetry the concentricity inequality has been generalized for arbitrary $C\in\CK^n$ \cite{BrGM17_2}:   
\begin{equation}\label{eq:sr+R<=s+1D/2}
s(C)r(K,C)+R(K,C) \leq \frac{s(C)+1}{2}D(K,C).
\end{equation}
In particular, when $s(C)=n$, equality holds for sets of the form $K=(1-\lambda)(-C)+\lambda C$, $\lambda\in[0,1]$ (and, more generally, for constant width sets with respect to $C$)

In the Euclidean case it is shown in \cite[Lemma 2.1]{BrGM17}) that $f(\CK^n,\B_n)$ is star-shaped with respect to the upper-right vertex $f(\B_n,\B_n)=(1,1)^T$. For practical purposes this means that these diagrams can be fully described by simply explaining the sets mapped onto its boundaries.
The next lemma shows that the star-shapedness is still true when replacing $\B_n$ by an arbitrary $C\in\CK^n$.
\begin{lem}\label{lem:starshaped}
Let $K,C\in\mathcal K^n$. 
Then
\[
f((1-\lambda)K+\lambda C,C)=(1-\lambda)f(K,C)+\lambda f(C,C),
\]
for every $\lambda\in[0,1]$.
\end{lem}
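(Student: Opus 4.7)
My plan is to reduce the claim to three Minkowski-linearity identities for the functionals in their first slot, all of which are inherited from the linearity of the support function, $h((1-\lambda)K + \lambda C, u) = (1-\lambda) h(K, u) + \lambda h(C, u)$. Specifically, I would first establish
\begin{align*}
R((1-\lambda)K + \lambda C, C) & = (1-\lambda) R(K,C) + \lambda, \\
r((1-\lambda)K + \lambda C, C) & = (1-\lambda) r(K,C) + \lambda, \\
D((1-\lambda)K + \lambda C, C) & = (1-\lambda) D(K,C) + 2\lambda.
\end{align*}

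For the circumradius, after translating $C$ so that $0 \in \mathrm{int}(C)$ (which affects none of the three functionals), the representation $R(A, C) = \min_{c} \max_u (h(A, u) - c^T u)/h(C, u)$ combined with the above expansion of $h((1-\lambda)K+\lambda C, u)$ and the change of variables $c = (1-\lambda) c_0$ (valid for $\lambda \in [0, 1)$) factors the min-max cleanly, giving the stated formula. The $\geq$ direction can alternatively be extracted from the trivial containment $K \subset c + R(K,C) C \Longrightarrow (1-\lambda)K + \lambda C \subset (1-\lambda)c + ((1-\lambda)R(K,C) + \lambda) C$. The inradius identity is the dual using $r(A, C) = \max_c \min_u (h(A, u) - c^T u)/h(C, u)$, and the diameter identity follows from $D(A, C) = 2\sup_u h(A-A, u)/h(C-C, u)$ as recalled in Section~\ref{sec:preliminaries}, combined with $(A+B)-(A+B) = (A-A)+(B-B)$. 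The boundary case $\lambda = 1$ is trivial since then $(1-\lambda)K + \lambda C = C$.

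Given the three identities, since $f$ is invariant under positive scalings of its first argument (numerators and denominator are all degree-one homogeneous in $K$), one may rescale to assume $R(K, C) = 1$. Under this normalization $R((1-\lambda) K + \lambda C, C) = 1$, and the two coordinates of $f((1-\lambda)K + \lambda C, C)$ become $(1-\lambda) r(K,C) + \lambda$ and $(1-\lambda) D(K,C)/2 + \lambda$, which is precisely $(1-\lambda)(r(K,C), D(K,C)/2) + \lambda (1, 1) = (1-\lambda) f(K, C) + \lambda f(C, C)$, since $f(C,C) = (1,1)$ by direct inspection.

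The only technical point lies in the minimax manipulations for $R$ and $r$: I need to check that the substitution $c = (1-\lambda) c_0$ is a bijection on $\R^n$ for $\lambda \in [0, 1)$ so that the outer optimization is undisturbed, and that the additive $\lambda$ can be pulled out of the $\max_u$ (resp.\ $\min_u$) since it does not depend on $u$. Both points are routine but are what makes the otherwise straightforward support-function bookkeeping rigorous.
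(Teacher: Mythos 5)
Your argument is correct in its core computations but takes a genuinely different route from the paper. The paper normalizes to $K\subset^{opt}C$ and invokes the touching criterion of Proposition~\ref{prop:opt_cont_criterion}: the contact points $p^j=(1-\lambda)p^j+\lambda p^j$ and their normals certify $R((1-\lambda)K+\lambda C,C)=1$, and ``the same argument'' is then claimed for $r$ and $D$. You instead derive all three Minkowski-linearity identities from support-function representations (a minimax formula for $R$, its dual for $r$, and the $h(A-A,\cdot)/h(C-C,\cdot)$ formula for $D$). This is more computational but also more uniform --- in particular it handles the diameter cleanly, where the paper's appeal to ``the same argument'' is terse --- and it proves the three identities for arbitrary $K$, not only normalized ones; the technical points you flag (the bijective substitution $c=(1-\lambda)c_0$ for $\lambda<1$ and pulling the constant $\lambda$ out of the inner $\sup$/$\inf$) are indeed the only things to check and are routine. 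The one step you should not gloss over is the final ``rescale to assume $R(K,C)=1$'': this is \emph{not} without loss of generality, because $K\mapsto(1-\lambda)K+\lambda C$ does not commute with dilatation of $K$, so replacing $K$ by $K/R(K,C)$ changes the left-hand side of the claimed identity. Your own formulas make this visible:
\[
f((1-\lambda)K+\lambda C,C)=\frac{1}{(1-\lambda)R(K,C)+\lambda}\left((1-\lambda)r(K,C)+\lambda,\ (1-\lambda)\tfrac{D(K,C)}{2}+\lambda\right),
\]
which equals $(1-\lambda)f(K,C)+\lambda(1,1)^T$ exactly when $R(K,C)=1$; a segment of circumradius $\rho\neq 1$ in the Euclidean disc (whose Minkowski combination with the disc is a stadium) already violates the unnormalized statement in the first coordinate. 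The paper's proof carries the same implicit restriction (``we may assume without loss of generality that $K\subset^{opt}C$''), and the lemma is only ever applied after such a normalization, so this is a defect of the statement rather than of your approach --- but since your computation exposes the denominator, you should state $R(K,C)=1$ as a hypothesis rather than dismiss it as a harmless rescaling.
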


\begin{proof}
We start noticing that because of the translation and dilatatation invariance of $f$ we may assume without loss of generality that $K\subset^{opt}C$ and doing so we have
$(1-\lambda)K+\lambda C \subset (1-\lambda)C+\lambda C = C$. 
Using Proposition \ref{prop:opt_cont_criterion}, there exist $p^1,\dots,p^k \in K \cap \bd(C)$ and $u^j\in N(C,p^j)$, $j\in[k]$ for some $k \in\{2,\dots,n+1\}$, such that $0 \in \conv(\set{u^1,\dots u^k})$. 
Now, since $p^j = (1-\lambda)p^j+\lambda p^j \in ((1-\lambda)K + \lambda C) \cap \bd(C)$ we can again conclude from Proposition \ref{prop:opt_cont_criterion} that $R((1-\lambda)K+\lambda C,C)=1$, for every $\lambda\in[0,1]$. 

The same argument give us $r((1-\lambda)K+\lambda C,C)=(1-\lambda)r(K,C)+\lambda$ and
$D((1-\lambda)K+\lambda C,C)=(1-\lambda)D(K,C)+2\lambda$. Thus
\[
\begin{split}
f((1-\lambda)K+\lambda C,C) & =\left((1-\lambda)r(K,C)+\lambda,(1-\lambda)\frac{D(K,C)}{2}+\lambda\right) \\
& = (1-\lambda)f(K,C)+\lambda (1,1),
\end{split}
\]
concluding the proof.
\end{proof}

\section{The Helly-dimension and its meaning for the minimal diagram}\label{sec:parallelotopes}

The \emph{Helly dimension} $\him(C)$ of a set $C\in\mathcal K^n$ is defined as the smallest positive number $k\in\N$ such that whenever we consider a set of indices $I\subset\N$ with the property $\bigcap_{i\in J}(x_i+C) \neq \emptyset$ for all $J\subset I$ with $|J|\leq k+1$ and  $x_i\in\R^n$, it already follows that $\cap_{i\in I}(x_i+C)\neq \emptyset$ (for more details on the Helly-dimension see \cite{BaKaPa, SzNa}). We say that a point $p\in\bd(C)$, $C\in\mathcal K^n$, is \emph{regular} or \emph{smooth} if $\dim(N(C,p))=1$. 
In \cite[Ch. IV]{BoMaSo} it is proven that the Helly dimension is equivalent to
the \emph{minimal dependence} $\mathrm{md}(C)$, which is the largest number $k\in\N$ such that there exist regular points $p^j\in \bd(C)$, $j\in[k+1]$, and
vectors $u^j\in N(C,p^j)$, $j\in[k+1]$, such that $0\in\conv(\{u^j:j\in[k+1]\})$ and such that for every
$I\subset[k+1]$, $|I|\leq k$, the vectors $\{u^j:j\in I\}$ are linearly independent.

Finally, notice that Sz\"okefalvi-Nagy \cite{SzNa} proved that if $C\in\CK^n$ then $\him(C)=1$ if and only if $C$ is a parallelotope.


\begin{lem}\label{lem:parallelotopes}
Let $C\in\CK^n$. The following are equivalent:
\begin{itemize}
    \item[(i)] $C$ is a parallelotope.
    \item[(ii)] $D(K,C)=2R(K,C)$ for every $K\in\CK^n$.
\end{itemize}
\end{lem}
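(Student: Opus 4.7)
The plan is to recognise the claim as a reformulation of the Helly property of $-C$ and to invoke the Sz\"okefalvi--Nagy characterisation of parallelotopes. The key dictionary is: for every $K\in\CK^n$ and $\lambda>0$, $R(K,C)\le\lambda$ iff $\bigcap_{p\in K}(p-\lambda C)\neq\emptyset$, since such a point is exactly a centre $c$ with $K\subset c+\lambda C$; similarly $D(K,C)\le 2\lambda$ iff $R(\{x,y\},C)\le\lambda$ for all $x,y\in K$, equivalently iff every pairwise intersection $(x-\lambda C)\cap(y-\lambda C)$ is nonempty. Hence the assertion that $D(K,C)=2R(K,C)$ for every $K\in\CK^n$ is precisely the statement that translates of $-C$ enjoy the strongest Helly property, namely $\him(-C)=1$.

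For (i)$\Rightarrow$(ii), if $C$ is a parallelotope then so is $-C$, and Sz\"okefalvi--Nagy gives $\him(-C)=1$. Applied to the family $\{p-\lambda C:p\in K\}$, pairwise nonemptiness upgrades to global nonemptiness, hence $D(K,C)\le 2\lambda$ forces $R(K,C)\le\lambda$ for every $\lambda$, and therefore $2R(K,C)\le D(K,C)$; the reverse inequality is \eqref{eq:D<=2R}.

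For (ii)$\Rightarrow$(i) I would argue by contrapositive. If $C$ is not a parallelotope, neither is $-C$, so $\him(-C)\ge 2$, and a standard compactness reduction yields finitely many points $p_1,\dots,p_m\in\R^n$ whose translates $p_i-C$ intersect pairwise but satisfy $\bigcap_{i=1}^m(p_i-C)=\emptyset$. Setting $K:=\conv\{p_1,\dots,p_m\}$, pairwise nonemptiness gives $R(\{p_i,p_j\},C)\le 1$ for all $i,j$; since $(x,y)\mapsto R(\{x,y\},C)$ is convex on $K\times K$, its maximum is attained at the extreme points of $K$ and hence $D(K,C)\le 2$. On the other hand, emptiness of the total intersection forces $R(K,C)>1$, so $D(K,C)\le 2<2R(K,C)$, contradicting (ii). The only subtlety is the conceptual step of identifying the claim with the Helly property of $-C$; once that is in place, Sz\"okefalvi--Nagy does the rest.
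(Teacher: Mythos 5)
Your proof is correct, but it takes a genuinely different route from the paper's. Both arguments ultimately rest on Sz\"okefalvi-Nagy's theorem that $\him(C)=1$ exactly for parallelotopes, but you use it through the intersection-of-translates formulation of the Helly dimension, via the dictionary $R(K,C)\le\lambda \Leftrightarrow \bigcap_{p\in K}(p-\lambda C)\neq\emptyset$ and $D(K,C)\le 2\lambda \Leftrightarrow$ all pairwise intersections of these translates are nonempty, so that (ii) literally becomes the statement ``pairwise intersection implies global intersection'' for translates of $-C$. The paper instead passes through the equivalent invariant $\mathrm{md}(C)$ (minimal dependence): for (ii)$\Rightarrow$(i) it picks smooth touching points witnessing $\mathrm{md}(C)=k$, builds $K$ as their convex hull and uses Proposition \ref{prop:opt_cont_criterion} together with smoothness of those points to force $k=1$; for (i)$\Rightarrow$(ii) it computes directly with the slab representation of the parallelotope, showing that two of the touching points must lie on a pair of opposing facets. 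Your version is arguably more economical --- it needs neither the smoothness/normal-cone bookkeeping nor the identity $\him=\mathrm{md}$ --- at the price of two compactness reductions you should make explicit: in (i)$\Rightarrow$(ii) the family $\{p-\lambda C : p\in K\}$ is infinite, so one applies Helly dimension $1$ to finite subfamilies and then invokes the finite intersection property of the compact sets $p-\lambda C$; in (ii)$\Rightarrow$(i) the same property extracts your finite witness family $p_1,\dots,p_m$ from an infinite family with empty total intersection. Both steps are standard. The remaining ingredients --- that $D(K,C)$ is attained at extreme points of $K$ (hence among the $p_i$), and that $\bigcap_{i}(p_i-C)=\emptyset$ forces $R(K,C)>1$ --- are correctly deployed.
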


\begin{proof}

Let us first mention that we make use of the fact $\him(C) = \mathrm{md}(C) =1$ if and only if $C$ is a parallelotope and essentially prove that $(ii)$ implies $\mathrm{md}(C)=1$, while $C$ being parallelotope implies $(ii)$.

We start proving \enquote{(ii) $\Rightarrow$ (i)} and assume that $k=\mathrm{md}(C)>1$. That means there exist smooth boundary points $p^1,\dots,p^{k+1}$ of $C$ and $u^j \in N(C,p^j)$, $j \in [k+1]$, such that $0 \in \conv(\set{u^1,\dots,u^{k+1}})$ and since $\mathrm{md}(C) = k$ we know that $\{u^{j_1},\dots,u^{j_k}\}$ is linearly independent for every choice $1\leq j_1<\cdots<j_k\leq k+1$. 

Now, let $K=\conv(\set{p^1,\dots,p^{k+1}})\in\CK^n$ and notice that by Proposition \ref{prop:opt_cont_criterion} we have $K\subset^{opt} C$.
Using (ii) and assuming without loss of generality that $p^1,p^2$ is a diametrical pair of $K$, we conclude $2=2R(K,C)=D(K,C)=D([p^1,p^2],C)$. Hence we have $R([p^1,p^2]C)=1$ and therefore $[p^1,p^2] \subset^{opt} C$. 
Using the smoothness of $p^1,p^2$ and Proposition \ref{prop:opt_cont_criterion} again, we conclude $0 \in \conv(\set{u^1,u^2})$, which shows $\mathrm{md}(C)=k=1$.

We now show \enquote{(i) $\Rightarrow$ (ii)} and remember that (i) implies that $C$ is a parallelotope. 
After suitable translations and dilatations of $K$ and $C$ we may assume $C=\bigcap_{i=1}^n\set{x\in\R^n:|(v^i)^Tx| \leq 1}$ for some
linearly independent vectors $v^1,\dots,v^n\in\R^n$ as well as $K \subset^{opt} C$. By Proposition \ref{prop:opt_cont_criterion} there exist 
$p^1,\dots,p^{n+1}\in K\cap\bd(C)$ and $u^j\in N(C,p^j)$, $j \in [n+1]$, such that $0 \in \conv(\set{u^1,\dots,u^{n+1})}$. 
To fulfill the latter the points $p^1,\dots,p^{n+1}$ have to touch a pair of opposing facets of $C$. Otherwise (after changing signs of the $v^i$, if necessary) 
we would have $(v^i)^Tp^j =1$ for all $j \in [n+1]$ and $i \in [n]$, which directly implies $u^j \in \pos(\set{v^1,\dots,v^n})$ for all $j \in [n+1]$, contradicting $0 \in \conv(\set{u^1,\dots,u^{n+1})}$. 

Thus, we may assume $(v^1)^Tp^1=-1$ and $(v^1)^Tp^2=1$. However, this implies by Proposition \ref{prop:opt_cont_criterion} that
$[p^1,p^2] \subset^{opt} C$ 
and therefore $D(K,C) \ge 2R([p^1,p^2],C) = 2R(K,C) \ge D(K,C)$, which concludes the proof.
\end{proof}

\begin{proof}[Proof of Theorem \ref{thm:squares}]
The first part is a direct consequence of $f(K,C) = \left(\begin{smallmatrix} 0 \\ 1 \end{smallmatrix} \right)$ if $K$ is a segment and $f(C,C) =  \left(\begin{smallmatrix} 1 \\ 1 \end{smallmatrix}\right)$ in combination with Lemma \ref{lem:starshaped}, while the second part follows directly from Lemma  \ref{lem:parallelotopes}.
\end{proof}




\section{The maximal diagram -- proof of the main result}
\label{sec:proofsMainResults}

We start noticing that Theorem \ref{cor:nr+R<=n+1D/2} is a direct consequence of \eqref{eq:sr+R<=s+1D/2}. 


\begin{proof}[Proof of Theorem \ref{cor:nr+R<=n+1D/2}]
Let $g(x):=ax/(x+1)+b/(x+1)$ for some $0\leq a\leq b$ and $x\geq 1$. Since
$g'(x)=(a-b)/(x+1)^2\leq 0$ we see that $g$ is non increasing.  
Now, using \eqref{eq:sr+R<=s+1D/2}, the fact that $0 \leq r(K,C) \leq R(K,C)$, and that $s(C) \leq n$, we conclude that
\[
\frac{n}{n+1}r(K,C)+\frac{1}{n+1}R(K,C) 
\leq \frac{s(C)}{s(C)+1}r(K,C)+\frac{1}{s(C)+1}R(K,C)
\leq \frac{D(K,C)}{2}.
\]
\end{proof}

Now, it only remains to prove the validity and tightness of the third inequality in Theorem \ref{thm:dominating_diagram}. To do so we should first recognize that by dividing through the circumradius $R(K,C)$ this inequality can be rewritten as 
\begin{equation}\label{eq:New_Ineq_isosceles_triangle}
\frac{r(K,C)}{R(K,C)} \geq \frac{D(K,C)}{2R(K,C)}\left(1-\frac{D(K,C)}{2R(K,C)}\right)
\end{equation}
or simply $D(K,C)(2-D(K,C)) \le 4r(K,C)$ in case $R(K,C)=1$.


First we show that certain triangles attain equality in that inequality.

\begin{lem}\label{lem:Isosceles_Extreme_Triangle}
Let $S=\conv(\{p^1,p^2,p^3\}) \in\mathcal K^2$ be an equilateral triangle,
with $\|p^i\|=1$, $i=1,2,3$, and $(p^i)^Tp^j=-1/2$, $1\leq i<j\leq 3$. Moreover, let
\[
T = \conv\left(\left\{ \frac12(p^1+p^2), \frac{D}{2}p^1+\left(1-\frac{D}{2}\right)p^3,\frac{D}{2}p^2+\left(1-\frac{D}{2}\right)p^3\right\}\right),
\]
for some $D\in[1,2]$. Then $R(T,S)=1$, $D(T,S)=D$, and
\[
r(T,S)=\frac{D(T,S)}{2}\left(1-\frac{D(T,S)}{2}\right).
\]
\end{lem}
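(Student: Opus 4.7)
I would break the verification into three parts, checking $R(T,S)=1$, $D(T,S)=D$, and $r(T,S)=(D/2)(1-D/2)$ in turn. Throughout I make essential use of the identity $p^1+p^2+p^3=0$, which follows from squaring and expanding: $\|p^1+p^2+p^3\|^2 = 3+2\cdot 3\cdot(-1/2)=0$.

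The circumradius step is the shortest. Using $p^1+p^2+p^3=0$ I identify each vertex of $T$ with an explicit boundary point of $S$: $q^1 = \tfrac12(p^1+p^2)$ is the midpoint of the edge $[p^1,p^2]$ of $S$, while $q^2 \in [p^1,p^3]$ and $q^3 \in [p^2,p^3]$ are convex combinations since $D/2 \in [\tfrac12,1]$ for $D \in [1,2]$. In particular $T \subset S$, and the normal cones $N(S,q^1), N(S,q^2), N(S,q^3)$ are generated by $-p^3, -p^2, -p^1$ respectively. Since these three vectors sum to zero, Proposition \ref{prop:opt_cont_criterion} applies to the containment $T \subset S$ and yields $R(T,S)=1$.

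For $D(T,S)=D$, I would first note that $q^2-q^3 = \tfrac{D}{2}(p^1-p^2)$ and invoke the support-function identity for segment diameters to obtain $D([q^2,q^3],S) = \tfrac{D}{2}\,D([p^1,p^2],S)$. Since $S$ is a 2-simplex with $r(S,S)=R(S,S)=1$ and $s(S)=2$, the concentricity inequality \eqref{eq:sr+R<=s+1D/2} combined with $D \le 2R$ forces $D(S,S)=2$, and by the equilaterality of $S$ each edge satisfies $D([p^i,p^j],S)=2$, giving $D([q^2,q^3],S) = D$. For the other two pairs a brief direct computation with the facet normals $v_i = -2p^i$ evaluates $\sum_i \max(v_i^T q^1, v_i^T q^2) = 3/2$ on $D \in [1,2]$ (the three maxima being $-\tfrac12, 1, 1$), hence $D([q^1,q^2],S) = 1$; the same value for $D([q^1,q^3],S)$ follows from the reflection symmetry used below. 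Since the diameter is attained on a pair of extreme points and $D \ge 1$, we conclude $D(T,S) = D$.

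The inradius is the main obstacle, and the guiding idea is to exploit the reflection $\rho$ through the line $\R p^3$, which preserves both $S$ (swapping $p^1 \leftrightarrow p^2$, fixing $p^3$) and $T$ (swapping $q^2 \leftrightarrow q^3$, fixing $q^1$). Hence an optimal inscribed copy $c + \mu S \subset T$ may be sought with centre $c = tp^3$ for some $t \in \R$. Writing $c + \mu p^3 = (t+\mu)p^3$, the $\rho$-symmetric requirement $c+\mu p^3 \in [q^2,q^3]$ pins that vertex to the midpoint of $[q^2,q^3]$ and yields $t+\mu = 1 - 3D/4$. Expanding the second $\rho$-invariant requirement $c+\mu p^1 \in [q^1,q^2]$ in the basis $\{p^1,p^2\}$ (via $p^3 = -p^1 - p^2$) produces a second linear equation, and solving the resulting $2\times 2$ system gives $\mu = D(2-D)/4 = (D/2)(1-D/2)$. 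To upgrade this to the inradius I again apply Proposition \ref{prop:opt_cont_criterion}: the three points $c+\mu p^1, c+\mu p^2, c+\mu p^3$ lie on the three distinct edges of $T$ (in the relative interiors for $D \in [1,2)$; the case $D=2$ is degenerate, with $T$ collapsing to $[p^1,p^2]$ and $\mu = 0$), and the three outer facet normals of any triangle sum to zero after positive scaling, giving $0 \in \conv\{u^1,u^2,u^3\}$. Hence $c + \mu S \subset^{opt} T$, and $r(T,S) = \mu$ follows from $r(T,S) = R(S,T)^{-1}$. The only really delicate point is correctly identifying the bijection between the three vertices of the inscribed copy and the three edges of $T$; the symmetric ansatz is what makes this bookkeeping transparent and the linear system tractable.
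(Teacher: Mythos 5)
Your proof is correct and follows essentially the same route as the paper: the circumradius via Proposition \ref{prop:opt_cont_criterion} with normals $-p^1,-p^2,-p^3$ summing to zero, the diameter by reducing to the pair $q^2,q^3$, and the inradius by using the reflection symmetry to place the incenter on the axis $\R p^3$ and solving a linear system (your values $1-3D/4$, the sum $3/2$, and $\mu=D(2-D)/4$ all check out). The only differences are in execution: you evaluate $D([q^1,q^2],S)=1$ exactly via the facet-normal circumradius formula for simplices, where the paper merely bounds that pair by $1$ using monotonicity under inclusion, and you add an explicit optimality check for the inscribed homothet via Proposition \ref{prop:opt_cont_criterion}, a step the paper leaves implicit.
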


\begin{proof}
First of all, recognize that if $D=2$ then $T=[p^1,p^2]$, which directly means that 
$D(T,S)=2R(T,S)=2$ and $r(T,S)=0$, proving the claim in that case.

Hence we may suppose $D<2$. In this case each vertex of $T$ touches 
each of the edges of $S$ in their relative interior, which by Proposition \ref{prop:opt_cont_criterion} directly implies $R(T,S)=1$. 

Now, observe that since $D \ge 1$ we have 
\[
\frac{D}{2}p^i+\left(1-\frac{D}{2}\right)p^3 \in \left[p^i,\frac12(p^i+p^3)\right],
\]
for $i=1,2$,
hence implying that
\[
\begin{split}
D\left(\left\{\frac{D}{2}p^i+\left(1-\frac{D}{2}\right)p^3,\frac12(p^1+p^2)\right\},S\right) &
\leq D\left(\mathrm{conv}\left(\left\{p^i,\frac12(p^i+p^3),\frac12(p^1+p^2)\right\}\right),S\right) \\
& = D\left(\left\{p^i,\frac12(p^1+p^2)\right\},S\right) \\
& = \frac{D(\{p^1,p^2\},S)}{2} = 1\\
& \leq D = D\left(\left\{\frac{D}{2}p^j+\left(1-\frac{D}{2}\right)p^3:j=1,2\right\},S\right)
\end{split}
\]
for both $i=1,2$, thus showing that $D(T,S)=D$.

We now compute the vertices $z^i:=c+r(T,S)p^i$, $i=1,2,3$, of $c+r(T,S)S$ such that $c+r(T,S)S \subset T$, for some $c\in\mathbb R^2$. 
Without loss of generality, we may assume $p^1=(\sqrt{3}/2,-1/2)^T$, $p^2=(-\sqrt{3}/2,-1/2)^T$, and $p^3=(0,1)^T$.
The symmetry of $T$ and $S$ with respect to the vertical line passing through the origin implies that 
\[
\begin{split}
z^3 & =\frac12\left(\frac D2\left(\frac{\sqrt{3}}{2},-\frac12\right)^T+\left(1-\frac{D}{2}\right)(0,1)^T\right)+\frac12\left(\frac D2\left(-\frac{\sqrt{3}}{2},-\frac12\right)^T+\left(1-\frac{D}{2}\right)(0,1)^T\right) \\
& = \left(0,1-\frac{3D}{4}\right)^T.
\end{split}
\]

Let $t\geq 0$ be such that
\[
z^1=z^3+t(p^1-p^3)=\left(0,1-\frac{3D}{4}\right)^T+t\left(\frac{\sqrt{3}}{2},-\frac32\right)^T=
\left(\frac{\sqrt{3}t}{2},1-\frac{3D}{4}-\frac{3t}{2}\right)^T.
\]
In particular, we get 
\[
r(T,S)=\frac{t\|p^1-p^3\|}{\|p^1-p^3\|}=t.
\]
Notice, the line containing the vertices $(D/2)(\sqrt{3}/2,-1/2)^T+(1-D/2)(0,1)^T$
and $(0,-1/2)^T$ of $T$ is described (in $(x,y)$ coordinates) by the equation
\[
y = \frac{1-\frac{3D}{4}+\frac12}{\frac{\sqrt{3}D}{4}}x - \frac12 = \frac{6-3D}{\sqrt{3}D}x - \frac12.
\]
Since $z^1$ has to fulfill this equation, we obtain
\[
1 - \frac{3D}{4} - \frac{3t}{2} = \frac{(6-3D)t}{2D} - \frac{1}{2}1 - \frac{3D}{4} - \frac{3t}{2} = \frac{(6-3D)t}{2D} - \frac{1}{2} 
\]
which shows $r(T,S)=t=D(2-D)/4$, concluding the proof.
\end{proof}

\begin{lem}\label{lem:New_ineq_isos_triangles_Triang}
Let $T,S\in\CK^2$ be both triangles. Then
\begin{equation}\label{eq:triangles_and_triangles}
\frac{r(T,S)}{R(T,S)} \geq \frac{D(T,S)}{2R(T,S)}\left(1-\frac{D(T,S)}{2R(T,S)}\right).
\end{equation}
\end{lem}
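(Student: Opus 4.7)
My plan is to prove the lemma by reducing to the extremal family of Lemma \ref{lem:Isosceles_Extreme_Triangle} via affine normalization and a symmetry argument. By affine invariance and homogeneity of $r,R,D$, I may assume that $S$ is the equilateral triangle of Lemma \ref{lem:Isosceles_Extreme_Triangle} and that $T\subset^{opt} S$ with $R(T,S)=1$, so that \eqref{eq:triangles_and_triangles} reduces to $4r(T,S)\geq D(T,S)(2-D(T,S))$. Proposition \ref{prop:opt_cont_criterion}, together with the observation that the outer normals to $S$ at relative-interior points of its three edges are $-p^1,-p^2,-p^3$ and satisfy $\sum_i(-p^i)=0$, implies that the generic optimal configuration has one vertex of $T$ on each of the three edges of $S$; the degenerate configurations (a vertex of $T$ coinciding with a vertex of $S$) are handled as limits of the generic case.

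In the generic case, label $q^i$ as the vertex of $T$ on the edge of $S$ opposite $p^i$. Using the $S_3$-symmetry of $S$, which acts on $T$ preserving $r,R,D$, I may relabel so that $D(T,S)$ is attained by the pair $\{q^1,q^2\}$ incident, through the edges of $S$, to a common vertex $p^3$. The central step is to show that, with $D(T,S)=D$ and this diametral pair fixed, the inradius $r(T,S)$ is minimized exactly at the configuration symmetric under the reflection $\sigma$ that fixes $p^3$ and interchanges $p^1$ and $p^2$, namely with $q^1,q^2$ at equal fractional distances from $p^3$ and $q^3$ at the midpoint of $[p^1,p^2]$. In this configuration $T$ belongs to the family of Lemma \ref{lem:Isosceles_Extreme_Triangle}, which gives the equality $4r(T,S)=D(2-D)$, and the inequality \eqref{eq:triangles_and_triangles} follows for all generic $T$.

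The main obstacle is this extremality claim. I would attempt one of two routes. Route (a) is an explicit computation, using the formula $D(T,S)=2\sup_{u\neq 0} h(T-T,u)/h(S-S,u)$ with the regular-hexagon gauge $S-S$ and the characterization of $r(T,S)$ via the maximal inscribed translate $z+\lambda S\subset T$, to express $D(T,S)$ and $r(T,S)$ as piecewise-smooth functions of the three edge-position parameters and verify the inequality algebraically. Route (b) is a more conceptual $\sigma$-symmetrization: construct a deformation of $T$ that weakly decreases $r(T,S)$ while preserving $R(T,S)=1$ and $D(T,S)=D$, thus reducing to the $\sigma$-symmetric case. Route (a) is elementary but tedious; route (b) is cleaner but requires care that the inclusion $z+r(T,S)S\subset T$ is preserved under the non-Euclidean gauge. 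In either case, once the $\sigma$-symmetric configuration is reached, Lemma \ref{lem:Isosceles_Extreme_Triangle} completes the proof.
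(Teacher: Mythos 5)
Your normalization and overall strategy are the right ones, and your Route~(b) is in fact exactly what the paper does; but as written the proposal has a genuine gap: the central extremality claim --- that for fixed $R(T,S)=1$ and fixed $D(T,S)=D$ the inradius is minimized at the $\sigma$-symmetric configuration --- is only asserted, and neither of your two routes is actually carried out. Everything before that point (the affine reduction to an equilateral $S$, placing one vertex $q^i$ of $T$ in the relative interior of each edge of $S$ via Proposition~\ref{prop:opt_cont_criterion}, relabelling so that the diametral pair is $\{q^1,q^2\}$) is bookkeeping; the entire mathematical content of the lemma lives in the step you defer. For Route~(b) in particular you would still have to exhibit a concrete deformation and verify two nontrivial facts for the non-Euclidean gauge: that the deformation does not create a new, longer diametral pair, and that it does not increase the largest inscribed homothet of $S$.

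For comparison, the deformation the paper uses (and which would complete your Route~(b)) has two steps. First, assuming $\|q^2-p^1\|\le\|q^1-p^2\|$, slide $q^1$ along its edge to the point $\bar q^1$ for which $[\bar q^1,q^2]$ is parallel to $[p^1,p^2]$; the intercept theorem gives $D([\bar q^1,q^2],S)=D([q^1,q^2],S)$, one checks separately that the pairs involving $q^3$ do not become diametral, and a comparison of the chords $T\cap L$ and $\bar T\cap L$ for a suitable line $L$ through two vertices of the inscribed homothet of $S$ yields $r(\bar T,S)\le r(T,S)$. Second, once $[q^1,q^2]$ is parallel to $[p^1,p^2]$, move $q^3$ to $\tfrac12(p^1+p^2)$; this preserves $R$, $D$, and (again by the intercept theorem) $r$, and lands exactly in the family of Lemma~\ref{lem:Isosceles_Extreme_Triangle}. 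Without this, or an executed version of your Route~(a), the proposal is a correct plan rather than a proof of \eqref{eq:triangles_and_triangles}.
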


\begin{proof}
Using the affine invariance of the radii, and since affine transformations of simplices are simplices, we can assume
without loss of generality that $S$ is an equilateral triangle centered at the origin.
In particular, let $S=\conv(\{p^1,p^2,p^3\})$ and $T=\conv(\{q^1,q^2,q^3\})$, for some $p^i,q^i\in\R^2$ with $\|p^i\|=1$, $i=1,2,3$.
Remember that $r(T,S)$, $D(T,S)$, and $R(T,S)$ are all homogeneous of degree $1$ on $T$. Hence \eqref{eq:triangles_and_triangles} holds true for 
$T$ if and only if \eqref{eq:triangles_and_triangles} holds true for 
$T/R(T,S)$. Thus we may assume that $R(T,S)=1$ and moreover, using an additional translation if necessary, that $T \subset^{opt} S$.
Using Proposition \ref{prop:opt_cont_criterion} this again allows us to assume that each $q^i$ belongs to the edge of $S$ opposing $p^i$, $i=1,2,3$.
Furthermore, since $D(T,S)$ is attained by two extreme points, we can suppose that $D(T,S)=D([q^1,q^2],S)$. Finally, without loss of generality, we may assume that $\|q^2-p^1\|\leq\|q^1-p^2\|$. 
Keeping the equilaterality of $S$ in mind this directly implies that if we choose $t \in \R^2$, such that $t+[q^1,q^2]\subset^{opt}R([q^1,q^2],S)S$ then $t+q^2 = R([q^1,q^2],S)p^1$.
Using this as well as the fact that $-p^1$ is an outer normal of the edge $[p^2,p^3]$ of $S$ we see that there exists a pair of parallel lines orthogonal to $p^1$ that supports $t+[q^1,q^2]$ as well as $R([q^1,q^2],S)S$ which in particular implies
\[
2\frac{h([q^1-q^2,q^2-q^1],p^1)}{h(S-S,p^1)}=D([q^1,q^2],S)=D(T,S).
\]

In particular, $T \subset \set{x \in S : (p^1)^Tx \leq (p^1)^Tq^2}$ and, using the symmetry of $S$ with respect to the line through 0 in direction of $p^3$, 
it must also be true that $T \subset \set{x\in S: x^Tp^2 \leq (p^1)^Tq^2}$. Otherwise the width of $T$ in direction of $p^2$ would be greater than in direction of $p^1$, contradicting that the latter is the maximal width.


\begin{figure} \label{fig:transform-T}
\begin{center}
    \includegraphics[width= 10cm]{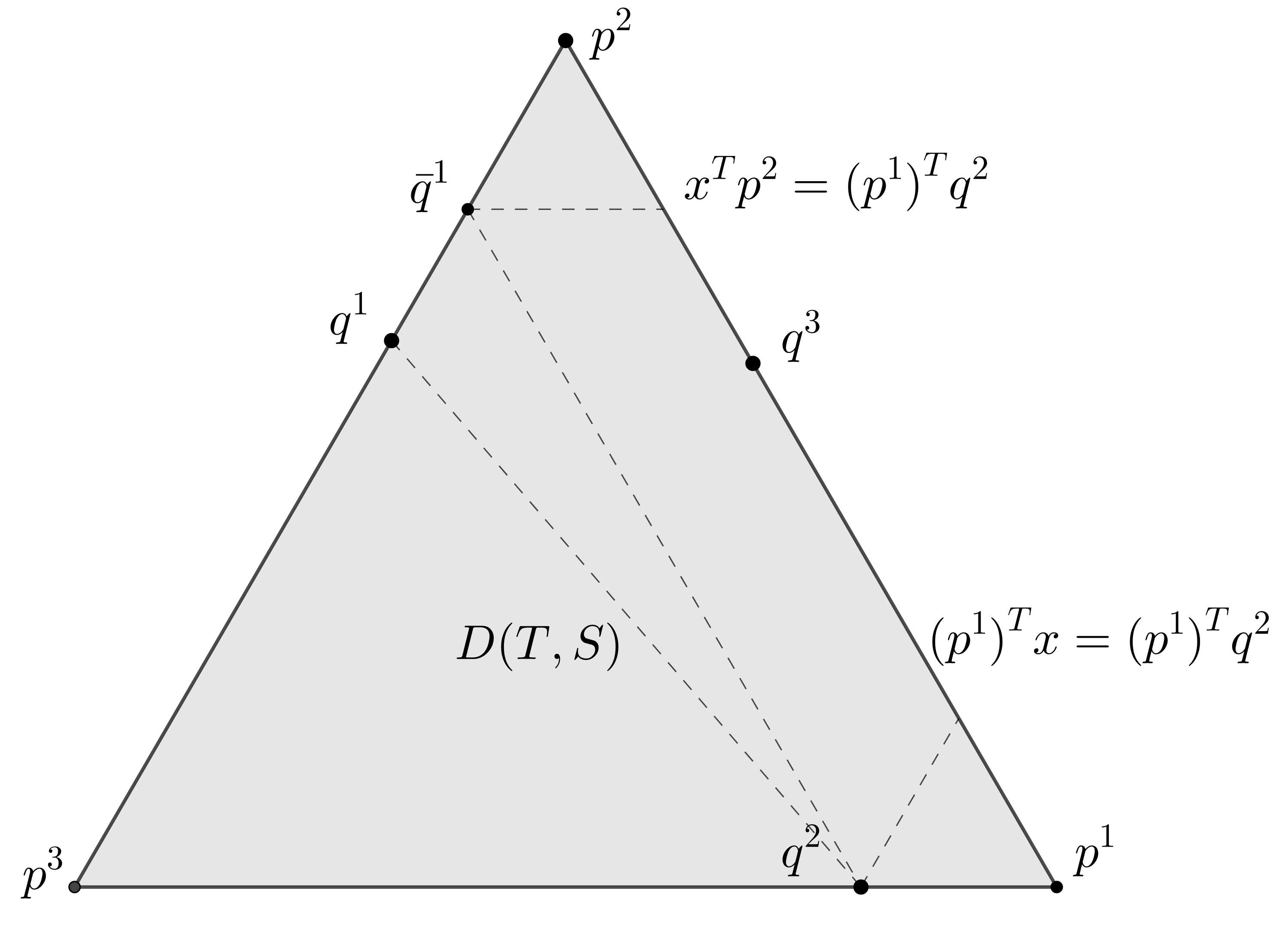}
\end{center}
\caption{Points $q^i$, $i\in[3]$, over the edges of the triangle $S=\mathrm{conv}(\{p^1,p^2,p^3\})$, and the point $\bar q^1$ which
replaces $q^1$ at the end of the first step of the proof.}
\end{figure}

For the next step, let $c\in\R^2$ be such that $c+r(T,S)S \subset T$. The idea now is that we replace $q^1$ by the only point $\bar q^1$ in $[p^2,p^3]$ such that $[\bar q^1,q^2]$ becomes parallel to $[p^1,p^2]$.
Using the intercept theorem we obtain $D([\bar q^1,q^2],S)=D([q^1,q^2],S)$ as well as $(p^1)^Tq^2 = (p^2)^T \bar q^1 \ge (p^2)^Tq^3$ and therefore
$D([\bar q^1,q^3],S) \leq D([q^1,q^3],S)$. Moreover, since $q^3 \in T \subset \set{x \in S : (p^1)^Tx \leq (p^1)^Tq^2}$ we have $(p^1)^Tq^3 \le (p^1)^Tq^2$. Hence 
\[
d(\bar q^1,\aff(q^2,q^3)) \leq d(q^1,\aff(q^2,q^3)),
\]
where $d(\cdot,\cdot)$ denotes the Euclidean distance. 
Let $L:=\aff([c+r(T,S)p^2,c+r(T,S)p^3])$ and $\bar T := \conv(\set{\bar q^1,q^2,q^3})$. The inequality above shows that the length of the segment $T\cap L$ is not shorter than the length of the segment $\bar T \cap L$.
Moreover, since the distance from $q^3$ to $L$ is not longer than the distance from $q^2$ to $L$, we can conclude that we cannot find a triangle $\bar c+\rho S$ contained in $\bar T$ with $\rho > r(T,S)$ (and $\rho = r(T,S)$ can only happen if $q^1=\bar q^1$ or if $[q^2,q^3]$ is parallel to $[p^2,p^3]$). Thus, we have shown that $D(\bar T,S) = D(T,S)$ and $r(\bar T,S) \leq r(T,S)$, and evidently, we also have $R(\bar T,S)=1$.

Finally, if we consider any triangle $\widetilde{T} := \conv(\{q^1,q^2,\widetilde{q}^3\})$ with $\widetilde{q}^3$ in $[p^1,p^2]$, such that
$\|\widetilde{q}^3-p^1\| \geq \|q^2-p^1\|$ and $\|\widetilde{q}^3-p^2\| \geq \|q^1-p^2\|$ we have $D(\widetilde{T},S)=D(T,S)$ for the same reasons than above and surely we also have $R(\widetilde{T},S)=1$ again. Moreover, in case of $[q^1,q^2]$ parallel to $[p^1,p^2]$ we obtain again from the intercept theorem that
$r(\widetilde{T},S) = r(T,S)$.
Thus we may select $\widetilde{q}^3 := \frac12(p^1+p^2)$ and obtain from the above
that the inradius of any triangle $T$ of circumradius $1$ and diameter $D$ (all with respect to $S$), is at least as big as the inradius of the final triangle $\widetilde{T}$ and it is an isosceles triangle by the choice of $q^3$. Thus we are in the situation described in Lemma \ref{lem:Isosceles_Extreme_Triangle}. Hence
\[
r(T,S) \geq r(\widetilde{T},S) = \frac{D(\widetilde{T},S)}{2}\left(1-\frac{D(\widetilde{T},S)}{2}\right)
= \frac{D(T,S)}{2}\left(1-\frac{D(T,S)}{2}\right),
\]
concluding the result.
\end{proof}





\begin{proof}[Proof of Theorems \ref{thm:dominating_diagram} and \ref{thm:BS_diag_simplex}]
Since we already know the general validity of  \eqref{eq:nr+R<=n+1D/2} and \eqref{eq:D<=2R}, we start proving the general validity of \eqref{eq:New_Ineq_isosceles_triangle}:
After a suitable dilatation, we can assume that $R(K,C)=1$ and in case $D(K,C)=2$, the inequality is obviously true. Thus we may assume $D(K,C)<2$ in the following. 
Let $T,S \subset\CK^2$ be as given in Lemma \ref{lem:opt_two_simplices}, such that  $R(T,S)=1$, $r(T,S)\leq r(K,C)$, and $D(T,S)\leq D(K,C)$. Recognize that $T$ is a segment and $S$ a rectangle if $\ell = 1$ and since they are both symmetric we immediately obtain $D(K,C)=2$, contradicting our assumption.

Thus, we must have $\ell=2$, i.e., $T$ and $S$ are both triangles.

For the sake of contradiction, let us assume that 
\[
r(K,C)< \frac{D(K,C)}{2}\left(1-\frac{D(K,C)}{2}\right).
\]
Let $g(y)=(y/2)(1-y/2)$, $y\in(1,2)$, and notice that $g'(y)=(2-2y)/4<0$. 
Thus $g(y)$ is a strictly decreasing function whenever $y\in(1,2)$.
Hence we would obtain
\[
r(T,S)\leq r(K,C) < \frac{D(K,C)}{2}\left(1-\frac{D(K,C)}{2}\right) \leq \frac{D(T,S)}{2}\left(1-\frac{D(T,S)}{2}\right),
\]
contradicting Lemma \ref{lem:New_ineq_isos_triangles_Triang}.

To finish the proof we know show that the boundaries described by the three inequalities are filled. First, notice that $(1-\lambda)[x,y]+\lambda S$, $x,y\in\R^2$,
attains equality in \eqref{eq:D<=2R} for all $\lambda\in[0,1]$ and in particular, the line segment $[x,y]$ and $S$ are the extreme cases $\lambda \in \set{0,1}$.
Second, we have $(1-\lambda)(-S)+\lambda S$, $\lambda\in[0,1]$, attains equality in \eqref{eq:nr+R<=n+1D/2}, where the extreme cases are the triangle $S$ and its reverse $-S$. 
Finally, the family of triangles $T$ from Lemma \ref{lem:Isosceles_Extreme_Triangle}, for some $D\in[1,2]$,
attains equality in \eqref{eq:triangles_and_triangles} and their extreme cases are a line segment $[x,y]$ (when $D=2$) and the triangle $-S$ (when $D=1$).

This means that the points described by the equality cases of those three inequalities form a simple closed curve which contains $f(\CK^2,S)$ and for each boundary point, there is a convex body $K$ mapped onto it via $f(K,S)$. 
Moreover, let $T$ be such that $f(T,S)$ belongs to the boundary induced by Lemma \ref{lem:New_ineq_isos_triangles_Triang}. 
Due to the homogeneity of the first entry of $r$ and $D$ we have that
\[
f(T,S)=\left(\frac{r(T,S)}{R(T,S)},\frac{D(T,S)}{2R(T,S)}\right) =
\left(r\left(\frac{T}{R(T,S)},S\right),\frac12D\left(\frac{T}{R(T,S)}\right)\right)=f\left(\frac{T}{R(T,S)},S\right).
\]
Hence we can replace $T$ by $T/R(T,S)$ and therefore assume that $R(T,S)=1$.
Finally, since $f(T,S)$ is linear in the first entry with respect to the Minkowski addition of $T$ and $S$ (see Lemma \ref{lem:starshaped})
and \eqref{eq:D<=2R} and the inequality in Theorem \ref{cor:nr+R<=n+1D/2} are linear with endpoint at $S$, thus the region contained between these three boundaries \emph{coincides} with the diagram $f(\mathcal K^2,S)$, since it is completely covered by the values
\[
f((1-\lambda)T+\lambda S,S)=(1-\lambda)f(T,S)+\lambda(1,1)^T,\quad \lambda\in[0,1],
\]
for every $D\in[1,2]$, which concludes the proof of Theorem \ref{thm:dominating_diagram}.

Theorem \ref{thm:BS_diag_simplex} now is a straightforward consequence of the fact that all the inequalities in Theorem \ref{thm:dominating_diagram} are already best possible if we choose $S$ to be a triangle.
\end{proof}



\section{Importance of $k$-gons in the planar case}\label{sec:456}


From the previous sections we learned that
\begin{equation}\label{eq:union_diagrams_general_case}
\bigcup_{C\in\CK^2} f(\CK^2,C) = f(\CK^2,\CK^2)=f(\CK^2,S) \quad \text{and} \bigcap_{C \in \CK^2} f(\CK^2,C) = f(\CK^2,P)
 \quad  \end{equation}
for some triangle $S$ and some parallelogram $P$.
In view of the first 
in combination with Lemma \ref{lem:opt_two_simplices} one may naturally conjecture that
$\bigcup_{C\in\mathcal K_0^2} f(\mathcal K^2,C) = f(\mathcal K^2,C_0)$,
for some $C_0\in\CK^2_0$. However, 
the symmetric case is a bit different as Theorem \ref{thm:hexagons} shows.


\begin{proof}[Proof of Theorem \ref{thm:hexagons}]
After a suitable translation and dilatation of $K$, let $K\subset^{opt} H$ Now, by Proposition \ref{prop:opt_cont_criterion} there exist $p^i\in K\cap \bd(H)$ and $u^i\in N(H,p^i)$ such that $0 \in \conv(\set{u^1,u^2,u^3})$ and 
since $D(K,H)<2R(K,H)$ we must have that the points $p^i$ belong to  non-consecutive edge of $H$.

We now show that $r(K,H)\geq R(K,H)/4$. To do so, let $q^1,\dots,q^6$ be the vertices of $H$ in clockwise order.
Moreover, let us suppose that each $p^1$ belongs to $[q^2,q^3]$, $p^2$ to $[q^4,q^5]$, and $p^3$ to $[q^6,q^1]$.
Notice that the length of 
\[
\conv(\{q^1,q^2,\frac{1}{2}(q^4+q^5)\}) \cap [q^3,q^6]
\]
is half of the length of $[q^1,q^2]$. The same holds when considering the triangle of vertices
$q^3,q^4,\frac12(q^6+q^1)$ and segment $[q^2,q^5]$ or the triangle of vertices $q^5,q^6,\frac12(q^2+q^3)$ and segment
$[q^1,q^4]$. Therefore, those three triangles optimally contain a factor $\frac14$ dilatation of $C$.

Now, assume that $c_1+r_1H\subset^{opt} K$, for some $c_1\in H$ and $r_1< \frac14$. In particular,
this means that $c+rH \subset^{opt}\conv(\set{p^1,p^2,p^3})$, for some  $r< 1/4$ and $c\in H$. 
Moreover, since $D(K,H)<2R(K,H)$ we may assume without loss of generality, $p^1 \in (q^2,q^3), p^2 \in (q^4,q^5)$, and $p^3 \in (q^1,q^6)$ and because of the symmetry group of $H$ that $c=tq^4 + su^1$, for some $t,s \geq 0$.

\begin{figure}
    \centering
    \includegraphics[width=8cm]{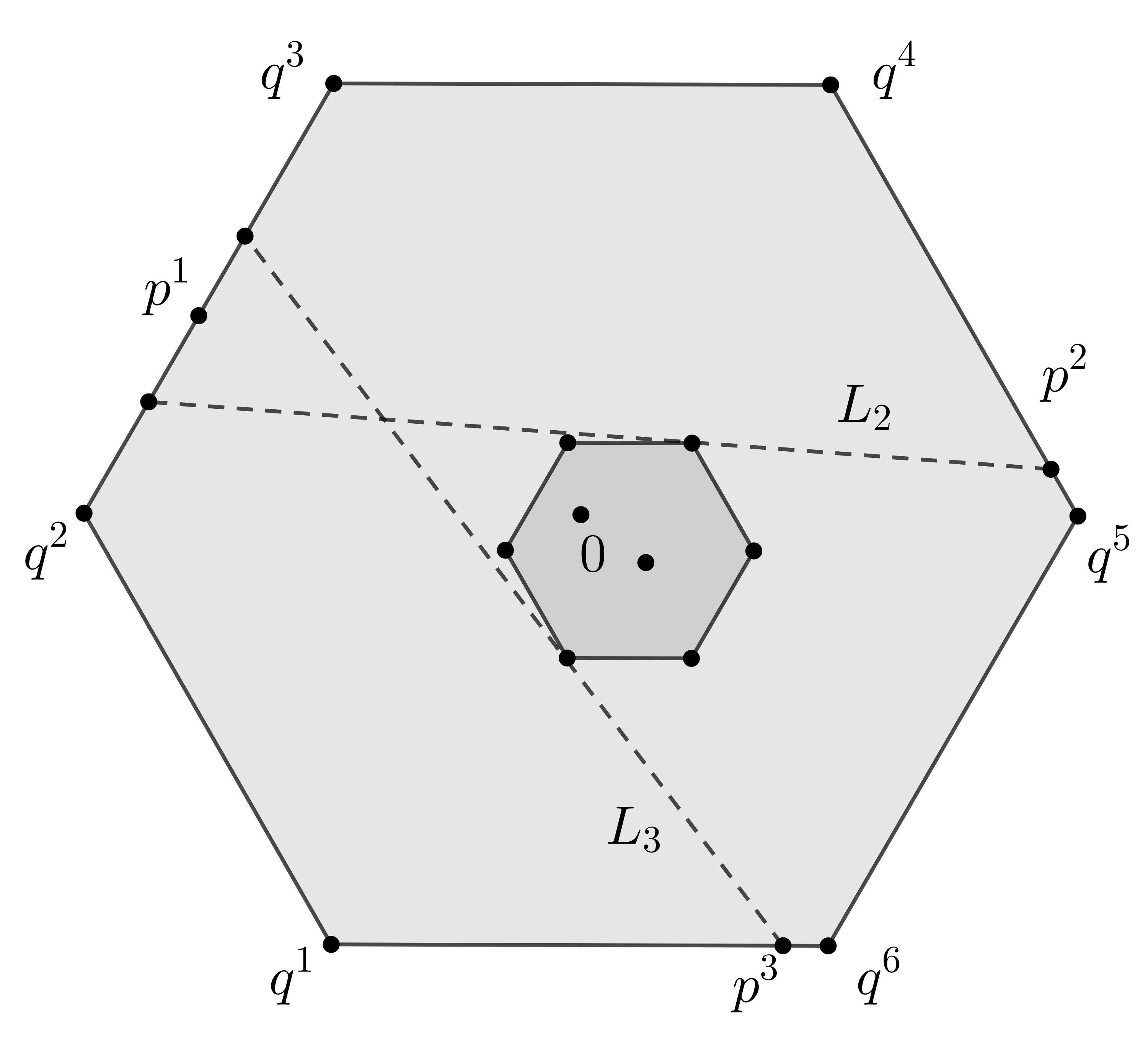}
    \caption{Trying to build a triangle (with vertices $p^1,p^2,p^3$) optimally contained within a hexagon $H$, such that it optimally contains itself a homothet of $H$ rescaled by a factor less than $1/4$.
    }
    \label{fig:HexagonProof}
\end{figure}

Now, let $L_2$ and $L_3$ be line segments supporting $c+rH$ with one endpoint at $p^2$ and $p^3$, respectively, such that $c+rH$ and $q^6$ are on the same halfspace induced by $L_2$, while $c+rH$ and $q^5$ are on the same halfspace induced by $L_3$ (\cf~Figure \ref{fig:HexagonProof}).
Notice that $c+rH$ cannot be contained in the halfspace determined by $[q^3,q^6]$ 
as otherwise, $L_2$ would not hit the boundary of $H$ again within $(q^2,q^3)$, contradicting that $c+rH$ is the inball of $\conv(\set{p^1,p^2,p^3})$.
The same reason implies that $c+rH$ cannot be contained in the halfspace determined by $[q^2,q^5]$. 

In the remainder of the proof we want to show that $L_2$ and $L_3$ cannot both go through the same point $p^1 \in (q^2,q^3)$, a contradiction again.

Let $L_2'$ and $L_3'$ be line segments supporting $rH$ with endpoints at $q^6$ and $q^5$, respectively, such that $q^5$ and $rH$ belong to the same halfspace induced by $L_2'$, while $q^6$ and $rH$ belong to the same halfspace induced by $L_3'$. Since $r<1/4$ this means that $L_2'$ intersects the boundary of $H$ at a point closer to $q^4$ (in terms of walking along the boundary of $H$) than $L_3'$. 
This last property will stay true on each of the next geometric steps of the proof.

Now, notice that replacing $rH$ by $tq^4+rH$ and changing $L_2'$ and $L_3'$ accordingly, the two lines support $tq^4+rH$ at $tq^4+rq^1$ and $tq^4+rq^4$, respectively. Since $[q^4,q^1]$ is parallel to $[q^5,q^6]$, we still have that $L_2'$ intersects the boundary of $H$ at a point closer to $q^4$ than $L_3'$.
Again, replacing $tq^4+rH$ by $tq^4+su^1+rH = c+rH$, and changing $L_2'$ and $L_3'$ correspondingly, moves the intersection point of $L_2'$ with the boundary of $H$ even closer to $q^4$ and the one of $L_3'$ even further away from $q^4$.
Finally, replacing $L_2'$ by $L_2$ and $L_3'$ by $L_3$ again moves the intersection point with the boundary of $H$ of the former closer to $q^4$ and the intersection point of the latter further away from $q^4$. 
Since at the beginning $L_2'$ and $L_3'$ could not intersect at a common boundary point of $H$, we can conclude they can neither at the end. Hence $L_2$ and $L_3$ cannot intersect at any point of the boundary of $H$, arriving at the desired contradiction.
Thus we must have $r(K,H) \geq R(K,H)/4$.
\end{proof}

\begin{rem}
Notice that if $H$ is a regular hexagon, then \eqref{eq:D<=2R} and \eqref{eq:r+R<=D} induce boundaries of the diagram $f(\CK^2,H)$. Moreover, assuming that $H=\conv(\set{q^1,\dots,q^6})$,
we conjecture that the family of isosceles triangles
\[
T_\lambda:=\conv(\set{(1-\lambda)q^1+\lambda q^2,(1-\lambda)q^4+\lambda q^3,\frac12(q^5+q^6)})
\]
induces the left boundary of the diagram $f(\CK^2,H)$. Doing a detailed computation one can check that
$f(T_\lambda,H)=((\lambda+1)(2-\lambda)/(4+\lambda),(1+\lambda)/2)$, or explicitly, that $r(T_\lambda,H)=D(T_\lambda,H)(3-D(T_\lambda,H))/(3+D(T_\lambda,H))$ (\cf~Figure \ref{fig:hexagon}).
\end{rem}

\begin{figure}
    \centering
    \includegraphics[width=8cm]{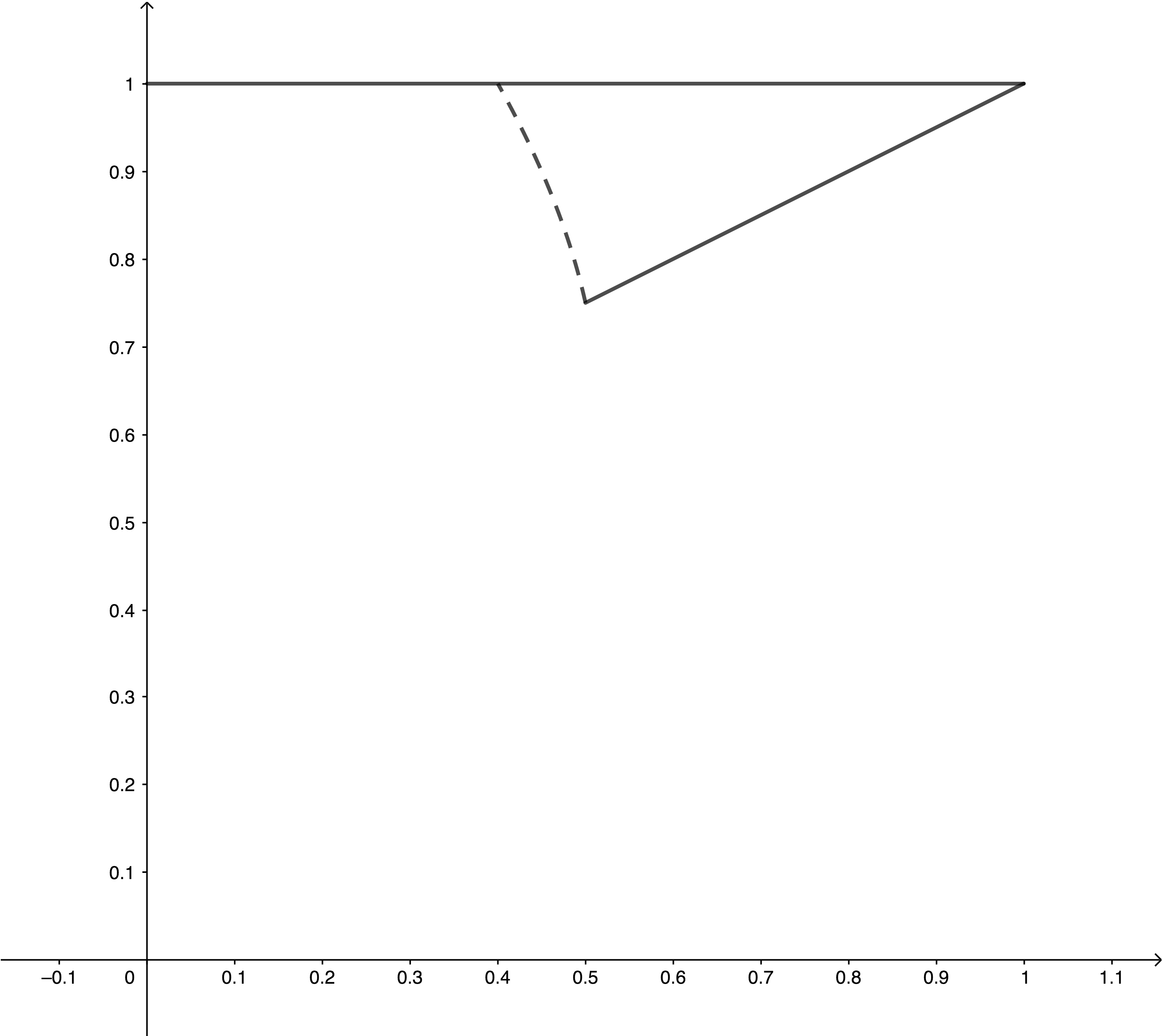}
    \caption{The diagram $f(\mathcal K^2,H)$, $H$ regular hexagon.}
    \label{fig:hexagon}
\end{figure}

On the one hand, Theorem \ref{thm:hexagons}, together with the fact that there exist equilateral triangles $T$ such that $f(T,\B_2) = (x,y)$ with $y<1$ and $x$ arbitrary close to 0, shows that the union $f(\CK^2,\CK^2_0) = \bigcup_{C \in \CK^2_0} f(\CK^2,C)$ \emph{cannot} be covered by $f(\CK^2,H)$, $H$ a regular hexagon. On the other hand, the equality case of \eqref{eq:Bohnenblust} shows that $(1/2,3/4)^T \in f(\CK^2,C)$, $C\in\CK^2_0$, if and only if $C$ is an affine transformation of a regular hexagon. Hence $f(\CK^2,\CK^2_0)$ cannot be given by a single symmetric container.






\begin{rem}\label{rem:union_hexagons}
With a bit more effort, we can also prove, by means of a new geometric property (that we will show in a forthcoming paper)
together with Lemma \ref{lem:opt_two_simplices}, the non-obvious fact that
\[f(\CK^2,\CK^2_0)=\bigcup_{S\text{ a triangle}}f(\CK^2,S\cap(-S)).\]
\end{rem}

We now turn to the case in which the diagram is considered with respect to a regular pentagon.

\begin{proof}[Proof of Theorem \ref{thm:pentagons}]
Let $p^1,\dots,p^5$ be the vertices of a regular pentagon $P$ with $\|p^i\|=1$, $i=1,\dots,5$, and $p^{ij}=(p^i+p^j)/2$, $1\leq i<j\leq 5$.
We start noticing that $-P\subset s(P)P$, where we clearly have that 
$s(P)=\|p^1\|/\|p^{34}\|=\sqrt{5}-1$. Notice also that since the diameter of a triangle is attained along an edge, 
selecting $T := \conv(\set{p^{34},p^{15},p^{12}})$ we obtain
\[
D([p^{15},p^{12}],P)<D([p^{34},p^{15}],P)=D(-\frac{P}{s(P)},P)=\frac{2}{\sqrt{5}-1}=\frac{\sqrt{5}+1}{2},
\]
and so $D(T,P)=(\sqrt{5}+1)/2$.
Moreover, an analogous geometric argument shows that for any $x \in [p^5,p^{15}]$ we have $D([x,p^{34}],P)=(\sqrt{5}+1)/2$ too.

Now, let $K\in\CK^2$. After a suitable dilatation and translation, we may assume that $K \subset^{opt} P$. Using Proposition \ref{prop:opt_cont_criterion}, there exist $q^1,q^2,q^3 \in K \cap \bd(P)$ such that, after reordering if needed, $q^1 \in [p^3,p^4]$, $q^2 \in [p^1,p^5]$, and $q^3 \in [p^1,p^2]$. 

Without loss of generality we may assume that $q^1$ is closer to $p^3$ than to $p^4$. Moreover, let $x\in[p^1,p^5]$. Then
\[
\begin{split}
    D([x,q^1],P) & \geq 2\frac{h([x-q^1,q^1-x],p^3)}{h(P-P,p^3)} \\
    & \geq 2\frac{h([x-p^{34},p^{34}-x],p^3)}{h(P-P,p^3)} \\
    & \geq 2\frac{h([p^{15}-p^{34},p^{34}-p^{15}],p^3)}{h(P-P,p^3)} \ge \frac{\sqrt{5}+1}{2},
\end{split}
\]
This shows that $D(K,P) \geq D([q^2,q^1],P) \geq (\sqrt{5}+1)/2$, i.e.
\[
\frac{D(K,P)}{R(K,P)} \geq \frac{\sqrt{5}+1}{2} = \frac{D(T,P)}{R(T,P)},
\]
concluding that $j_P=(\sqrt{5}+1)/2$.

In order to finish the proof, let 
\[
T_\lambda:=\conv(\set{(1-\lambda)p^5+\lambda p^1,(1-\lambda)p^2+\lambda p^1,p^{34}}),
\]
$\lambda\in[0,1/2]$ (which means $T=T_{\frac 12})$. From the computations above, we know that 
\[
D([(1-\lambda)p^5+\lambda p^1,p^{34}],P)=D([(1-\lambda)p^2+\lambda p^1,p^{34}],P)=\frac{\sqrt{5}+1}{2},
\]
for every $\lambda\in[0,1/2]$. Moreover,
since 
\[
D([(1-\lambda)p^5+\lambda p^1,(1-\lambda)p^2+\lambda p^1],P) 
\left\{\begin{array}{lr}
    <\frac{\sqrt{5}+1}{2} & \text{if } \lambda=1/2,\\
    =2 & \text{if }\lambda=0, 
\end{array}\right.  
\]
by continuity there exists $\lambda_0\in(0,1/2)$ such that $D([(1-\lambda_0)p^5+\lambda_0 p^1,(1-\lambda_0)p^2+\lambda_0 p^1],P) 
=(\sqrt{5}+1)/2$. 
Such $\lambda_0$ fulfills in particular that $\|p_{34}-p^{15}\|=\|(1-\lambda_0)p^5+\lambda_0 p^1-(1-\lambda_0)p^2-\lambda_0p^1\|$, i.e. 
$(1-\lambda_0)\|p^5-p^2\|=\|p^{34}-p^{15}\|$ and thus
\[
1-\lambda_0=\frac{\|p^{34}-p^{15}\|}{\|p^5-p^2\|}=\frac{\|p^{34}-p^{15}\|}{\|p^3-p^1\|}=\frac{D(T_1,P)}{2}=\frac{\sqrt{5}+1}{4},
\]
from which $\lambda_0=(3-\sqrt{5})/4$. For such $\lambda_0$, define $T':=T_{\lambda_0}$. Then all three edges of $T'$ are diametrical, which means $T'$ is equilateral and $R(T',P)/D(T',P) = j_P$, which concludes the proof.
\end{proof}

\begin{rem}
Notice that using the notation of Theorem \ref{thm:pentagons}, 
$s(P)=\sqrt{5}-1$ together with \eqref{eq:sr+R<=s+1D/2} gives
\begin{equation} \label{eq:right-pentagon}
(\sqrt{5}-1)r(K,P)+R(K,P)\leq\frac{\sqrt{5}}{2}D(K,P)
\end{equation}
for every $K\in\CK^2$ and equality holds in the above inequality for $(1-\lambda)(-P)+\lambda P$, $\lambda\in[0,1]$ and especially for $K=-P$ we have equality in \eqref{eq:sr+R<=s+1D/2} for a set, which also achieves the Jung-constant $j_P$. 
Moreover, we conjecture that the family filling the left boundary of the diagram $f(\CK^2,P)$
is given by the isosceles triangles
\[
T_\lambda 
:=
\conv(\{(1-\lambda)p^1+\lambda p^2,(1-\lambda)p^1+\lambda p^5,p^{34}\}) 
,
\]
for $\lambda\in[0,1/2]$. After a lengthy computation, their image equals
$f(T_\lambda,P)=(\lambda(1+\lambda\frac{\sqrt{5}-3}{2}),1+\lambda\frac{\sqrt{5}-3}{2})^T$.
Explicitly, $r(T_\lambda,P)=\frac{2}{\sqrt{5}-3}(D(T_\lambda,P)/2-1)D(T_\lambda,P)/2$.
Gathering this information, the Jung-constant of $P$ given in Theorem \ref{thm:pentagons}, \eqref{eq:D<=2R}, and \eqref{eq:right-pentagon}, we obtain the (in one boundary conjectured) diagram $f(\CK^2,P)$ (\cf~Figure \ref{fig:pentagon}).
\end{rem}

\begin{figure}
    \centering
    \includegraphics[width=8cm]{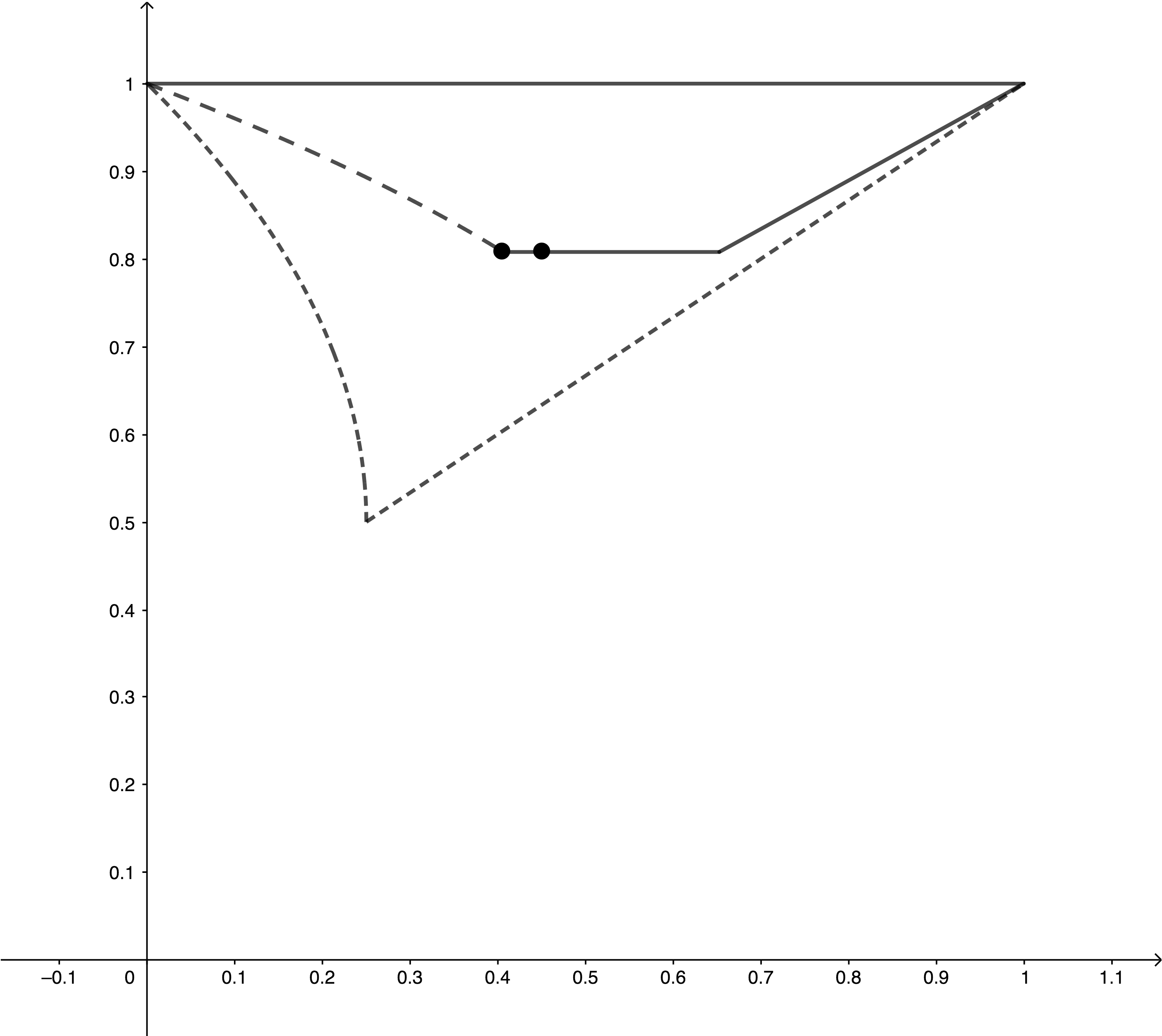}
    \caption{The diagram $f(\CK^2,P)$, with $P$ a regular pentagon, and the set $f(\CK^2,\CK^2)$. The two bold points indicate the coordinates of the isosceles triangle $T$ (left one) and the equilateral $T'$ (right).}
    \label{fig:pentagon}
\end{figure}


\begin{rem}
Let us denote by $\CC^k$ and  $\CC^k_0$ the families of all $k$-gons and all symmetric $k$-gons, respectively. Then we can find sequences $(C_m)_{m \in \N}$ in $\CC^k$ or in $\CC^k_0$ such that $C_m \rightarrow S$ or $C_m \rightarrow S\cap(-S)$, ($m\rightarrow \infty$), respectively, for some triangle $S$ and with respect to the Hausdorff metric. Doing so the continuity of the radii functionals together with Theorems \ref{thm:BS_diag_simplex} and \ref{thm:dominating_diagram} (see also Remark \ref{rem:union_hexagons}) would immediately imply that
\[
f(\CK^2,\CK^2)=\overline{\bigcup_{C\in\mathcal C^k}f(\CK^2,C)} 
\quad and \quad f(\CK^2,\CK_0^2)=\overline{\bigcup_{C\in\mathcal C^k_0}f(\CK^2,C)},
\]
for every $k\geq 3$ or $k\geq 6$, respectively.
Moreover, in case $k \ge 4$ we could choose the sequence $(C_m)_{m \in \N}$ such that it converges against a parallelogram thus showing together with Theorem \ref{thm:squares} that 
\[\overline{\bigcap_{C\in\CC^k}f(\CK^2,C)} = \conv\left(\set{\left(\begin{smallmatrix} 0 \\ 1 \end{smallmatrix}\right),\left(\begin{smallmatrix} 1 \\ 1 \end{smallmatrix}\right)}\right)\]
\end{rem}



\end{document}